\begin{document}

\newtheorem{definition}{Definition}[section]
\newtheorem{theorem}[definition]{Theorem}
\newtheorem*{theorem_}{Theorem}
\newtheorem{lemma}[definition]{Lemma}
\newtheorem{question}[definition]{Question}
\newtheorem{corollary}[definition]{Corollary}
\newtheorem*{corollary_}{Corollary}
\newtheorem{proposition}[definition]{Proposition}
\newtheorem{claim}[definition]{Claim}
\newtheorem{conjecture}[definition]{Conjecture}
\newtheorem*{maintheorem}{Main Theorem}
\theoremstyle{remark}
\newtheorem{remark}{Remark}[section]
\newtheorem{notation}[remark]{Notation}
\newtheorem{example}[remark]{Example}
\newcommand{\Mob}{\textnormal{M\"ob}}
\newcommand{\Isom}{\textnormal{Isom}}
\newcommand{\Hom}{\textnormal{Hom}}
\newcommand{\Aut}{\textnormal{Aut}}
\newcommand{\Ad}{\textnormal{Ad}}

\title[Deformations of representations and earthquakes on $SO(n,1)$ surface groups]{Deformations of fundamental group representations and earthquakes on $SO(n,1)$ surface groups}
\author{Son Lam Ho\\
}
\address{Department of Mathematics\\
Universit\'{e} de Sherbrooke\\
Sherbrooke, QC Canada.
}
\thanks{}
\subjclass[2010]{57M50, 20F65}
\keywords{CAT(0) manifold, Deformation, Surface group, Quasifuchsian, Hyperbolic, Convex cocompact, Fenchel-Nielsen, Earthquakes, Bending}
\begin{abstract}
In this article we construct a type of deformations of representations $\pi_1(M)\rightarrow G$ where $G$ is an arbitrary lie group and $M$ is a large class of manifolds including CAT(0) manifolds. The deformations are defined based on codimension 1 hypersurfaces with certain conditions, and also on disjoint union of such hypersurfaces, i.e. multi-hypersurfaces. We show commutativity of deforming along disjoint hypersurfaces. As application, we consider Anosov surface groups in $SO(n,1)$ and show that the construction can be extended continuously to measured laminations, thus obtaining earthquake deformations on these surface groups.
\end{abstract}
\maketitle
\section{Introduction}
The Fenchel-Nielsen twist is one of the most fundamental tools in studying the deformation space of a hyperbolic surface $M$, equivalently the Teichmueller space $\mathcal{T}(M)$. Geometrically it can be described as cutting the surface along a simple closed geodesic, do a twist of some length $t$ and then glue back. However these deformations can also be described algebraically and perhaps more naturally so. That is the point of view of this paper: we will define a type of algebraic deformations of representations $\rho:\pi_1(M)\rightarrow G$ where $G$ is an arbitrary lie group and $M$ is a manifold with contractible universal cover. It turns out that many geometric deformations are special cases of this construction if we consider only the holonomy representation, for example, bending a quasifuchsian surface group defined by Thurston \cite{ThurstonNotes}, Johnson-Millson bending \cite{JohnsonMillson}, the twist-bulge deformation of a convex real projective surface by Goldman \cite{GoldmanConvexProj}, etc. Interestingly, the starting representation $\rho$ from which we deform need not be discrete. 

\subsection{Construction and results}
Let $M$ be an oriented surface of genus $g>1$ and $N\hookrightarrow M$ a directed simple closed curve, this is the case of most interest to us even though our construction applies more generally to aspherical manifolds of higher dimensions with two-sided aspherical hypersurfaces. Let $\rho:\pi_1(M)\rightarrow G$ be faithful, and suppose that the centralizer $C_G(\rho(\pi_1(N)))$ is non-trivial. For each lift $N_i$ (where $i=0,1,2,..$) of $N$ in the universal cover $\widetilde{M}$, we assign a transformation $\gamma_i$ in the $G$-centralizer of the $\rho$-image of the cyclic subgroup of $\pi_1(M)$ which preserve $N_i$ under deck transformation. This assignment of $\gamma_i$ also has to satisfy a condition of equivariance, which means that the choice of $\gamma_0$ for $N_0$ determines the rest of the $\gamma_i$'s. Fix a base point $\tilde{x}_0\in\widetilde{M}$, our construction can be informally described as follows. Given $A\in\pi_1(M)$, we consider a directed path from $\tilde{x}_0$ to $\tilde{x}_0 A$ where $\tilde{x}_0 A$ is the image of $\tilde{x}_0$ under the right-action of deck transformations. This path will cross an ordered collection of lifts $N_1,..., N_k$, we can define a new function $\mathcal{E}_{N,\gamma}(\rho):\pi_1(M)\rightarrow G$ such that:
$$
\mathcal{E}_{N,\gamma}(\rho)(A) = \rho(A) \gamma_k^{s_k} ... \gamma_1 ^{s_1} 
$$
where $s_i$ is the intersection sign between $N_i$ and the path from $\tilde{x}_0$. In fact the result does not depend on the particular path from $\tilde{x}_0$ to $\tilde{x}_0 A$, as long as this path intersects $N_i$ transversely. We can then show that the resulting map $\mathcal{E}_{N,\gamma}(\rho):\pi_1(M)\rightarrow G$ is indeed a homomorphism. Thus for example, when $G=PSL(2,\mathbb{R})$ and $\rho$ a holonomy representation of a hyperbolic surface, we can choose $\gamma_i$ inside a 1-parameter group of hyperbolic transformations and obtain a 1-parameter family of representations corresponding to the Fenchel-Nielsen twists.

In section \ref{sec:multi-hyp} we generalize the above construction to multi-curves on surfaces (and multi-hypersurfaces on manifolds). We will also prove a commutativity result, Theorem \ref{thm:commute}, which we will state below. Let $L$ be a multi-hypersurface, or a union of disjoint hypersurfaces on $M$, satisfying generic conditions. Suppose that for each component $L_i$ of lifts of $L$ we have $\gamma_i$ and $\alpha_i$ chosen equivariantly such that the deformations $\mathcal{E}_{L,\gamma}(\rho)$ and $\mathcal{E}_{L,\alpha}(\rho)$ can be defined. Then
 \begin{theorem_}(Theorem \ref{thm:commute})
  If we have $\gamma_i \alpha_i = \alpha_i \gamma_i$ then
 $$
 \mathcal{E}_{L,\gamma}(\mathcal{E}_{L,\alpha}(\rho)) =  \mathcal{E}_{L,\alpha}(\mathcal{E}_{L,\gamma}(\rho)).
 $$
 \end{theorem_}
This implies the following important corollaries: 1-parameter deformations along a hypersurface is a flow, that is, $\mathcal{E}_{L,\gamma^t}( \mathcal{E}_{L,\gamma^s}(\rho) ) = \mathcal{E}_{L,\gamma^{t+s}}(\rho)$; and deformations along disjoint hypersurfaces commute. For example, given a $PSL(2,\mathbb{C})$ surface group, bending it along a curve and twisting it along another (disjoint) curve are commutative.

The above construction and theorem are presented in section 2. A large part of this paper is in section 3 where we switch attention to the case of surface groups in $G=SO(n,1)  =\Isom(\mathbb{H}^n)$ and study the limit of deformations along weighted simple closed curves that approach a measured lamination. Let $S$ be a surface of genus $g>1$. Thurston introduced the space of measured laminations $\mathcal{ML}(S)$ which can be thought of as a completion of the space of weighted simple closed $\mathcal{C}(S)$ which is in fact dense in $\mathcal{ML}(S)$ \cite{ThurstonNotes}. The classical earthquake deformation is then defined to be the limit of Fenchel-Nielsen twists as a sequence in $\mathcal{C}(S)$ approach a measured lamination \cite{KerckhoffNielsen}. We aim to generalize this method for Anosov surface groups in $SO(n,1)$ in order to define earthquakes.

Guichard-Wienhard \cite{GuichardWienhardAnosov} showed that for $SO(n,1)$ surface groups, being Anosov is equivalent to being convex cocompact (See also the work of Bowditch \cite{BowditchGeomFinite}, Kapovich-Leeb-Porti \cite{KLP2016notes}). We state below a reduced version of their theorem.
\begin{theorem}
\label{thm:GW}
(Guichard-Wienhard) Let $\pi$ be a finitely generated word hyperbolic group and $G$ a real semisimple Lie group of real rank 1. For a representation $\rho:\pi\rightarrow G$, the following are equivalent:
\begin{enumerate}
\item[(i)] $\rho$ is Anosov.
\item[(ii)] There exists a continuous $\rho$-equivariant and injective map $L:\partial_\infty\pi \rightarrow G/P$
\item[(iii)] ker $\rho$ is finite and $\rho(\pi)$ is convex cocompact.
\end{enumerate}
\end{theorem}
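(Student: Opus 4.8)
The plan is to prove the equivalence by the cyclic chain of implications (iii) $\Rightarrow$ (i) $\Rightarrow$ (ii) $\Rightarrow$ (iii), using throughout the defining feature of real rank one: the symmetric space $X=G/K$ is a CAT($-1$) space whose visual boundary $\partial_\infty X$ is $G$-equivariantly identified with the flag variety $G/P$, and in which ``convex cocompact'' carries its classical geometric meaning of acting properly discontinuously and cocompactly on the convex hull of the limit set $\Lambda\subset\partial_\infty X$.

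For (iii) $\Rightarrow$ (i), I would start from a convex cocompact $\rho(\pi)$ acting geometrically on the convex hull $C\subset X$ of $\Lambda$. By the \v{S}varc--Milnor lemma the orbit map $\rho(\pi)\to C$ is a quasi-isometry, and since $\ker\rho$ is finite the composition $\pi\to\rho(\pi)\to C$ is a quasi-isometric embedding; as $\pi$ is word hyperbolic this induces a homeomorphism $\partial_\infty\pi\xrightarrow{\sim}\Lambda$. I would take this boundary homeomorphism as the candidate limit map $\xi\colon\partial_\infty\pi\to\partial_\infty X=G/P$ and then verify the $P$-Anosov contraction/expansion estimates directly from the pinched negative curvature of $X$: along the geodesic in $C$ shadowing a given ray, the flat bundle associated to $\rho$ splits into stable and unstable subbundles determined by $\xi$, and these contract and expand exponentially along the flow at a rate fixed by the curvature bound, which is precisely the dynamical Anosov condition.

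For (i) $\Rightarrow$ (ii), the Anosov hypothesis already furnishes a continuous $\rho$-equivariant limit map $\xi\colon\partial_\infty\pi\to G/P$, so the only issue is injectivity. Here I would invoke the transversality built into the Anosov definition together with the rank-one fact that two \emph{distinct} points of $G/P=\partial_\infty X$ are automatically opposite (transverse): the contraction property sends the endpoints of two distinct geodesic rays of $\partial_\infty\pi$ to distinct, hence transverse, boundary points, giving injectivity of $\xi$.

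The implication (ii) $\Rightarrow$ (iii) is where I expect the real work to be. One now has only a bare continuous equivariant injective map and must reconstruct the full geometry. I would first dispose of the kernel: for $g\in\ker\rho$, equivariance gives $\xi(g\cdot z)=\rho(g)\xi(z)=\xi(z)$, so by injectivity $g$ fixes $\partial_\infty\pi$ pointwise; an infinite-order element of a hyperbolic group cannot do this (it acts with north--south dynamics), so $\ker\rho$ is torsion, hence finite since torsion subgroups of hyperbolic groups are finite. The harder half is promoting the topological embedding to convex cocompactness: setting $\Lambda:=\xi(\partial_\infty\pi)$ and forming its convex hull $C\subset X$, I must show the $\rho(\pi)$-action on $C$ is proper and cocompact. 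The obstacle is that an injective continuous map carries, a priori, no metric control, so the task is really to upgrade equivariance plus continuity to a uniform contraction estimate --- equivalently to show that distinct elements $\rho(g_n)$ escaping every compact set push all but one point of $\Lambda$ towards a single attracting flag. This is a limiting/ping-pong argument in which compactness of $\partial_\infty\pi$ and the CAT($-1$) geometry of $X$ are essential; once properness and cocompactness on $C$ are established, convex cocompactness of $\rho(\pi)$ follows and the cycle closes.
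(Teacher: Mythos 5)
First, a point of orientation: the paper does not prove this statement. It is quoted (twice, with the label \ref{thm:GW}) as a theorem of Guichard--Wienhard, cited to \cite{GuichardWienhardAnosov}, and used downstream as a black box; the paper's ``proof'' is the citation. So your attempt can only be compared against the argument in the literature, and against that standard the proposal has a genuine gap exactly where you locate ``the real work.'' Your treatment of (ii) $\Rightarrow$ (iii) is a restatement of the goal, not a proof: writing that it ``is a limiting/ping-pong argument \dots once properness and cocompactness on $C$ are established, convex cocompactness of $\rho(\pi)$ follows and the cycle closes'' assumes precisely what must be shown. Nothing in the text produces discreteness of $\rho(\pi)$, the uniform contraction estimates, or the identification of $\xi(\partial_\infty\pi)$ with the limit set. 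The known route is different in kind and goes through convergence group theory: since $\partial_\infty\pi$ is compact, the continuous injective equivariant map $\xi$ is a homeomorphism onto its image $\Lambda$; it therefore conjugates the action of $\pi$ on $\partial_\infty\pi$ --- which is a \emph{uniform convergence action} by Bowditch's dynamical characterization of hyperbolic groups --- to the $\rho(\pi)$-action on $\Lambda$. Properness of the induced action on the space of distinct triples of $\Lambda$ then yields finiteness of $\ker\rho$ and discreteness of $\rho(\pi)$ simultaneously; one then checks that $\Lambda$ is the full limit set and invokes the theorem (Bowditch, Tukia) that a discrete rank-one group acting as a uniform convergence group on its limit set is geometrically finite without parabolics, i.e.\ convex cocompact. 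These steps are the substance of the theorem in rank one, and ``ping-pong'' is not a substitute for them.

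Two further issues. Your kernel argument in (ii) $\Rightarrow$ (iii) tacitly assumes $\pi$ is non-elementary: if $\partial_\infty\pi$ has only two points, an infinite-order element \emph{does} fix the boundary pointwise, and the equivalence as stated actually fails for $\pi=\mathbb{Z}$ with $\rho$ trivial (condition (ii) holds vacuously, yet the kernel is infinite). Guichard--Wienhard's statement carries hypotheses excluding this case, which the paper's paraphrase suppresses; a correct proof must flag it, even though it is harmless for the surface groups the paper cares about. Finally, in (iii) $\Rightarrow$ (i) the Anosov estimates are asserted rather than derived --- one needs the orbit map quasi-isometry to give uniform conical shadowing of geodesics before pinched negative curvature converts into exponential contraction of the stable/unstable subbundles along the flow --- though this direction is standard and your sketch points the right way.
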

Indeed since $S$ is a closed surface, this coincide with the notion of $1$-quasifuchsian group in \cite{KapovichHigherKlein}. When $\pi = \pi_1(S)$ and $G=SO(n,1)$, condition $(ii)$ above implies that there exists a continuous $\rho$-equivariant injective map $L:\partial_\infty(\widetilde{S})\rightarrow \partial_\infty\mathbb{H}^n$ whose image is the limit set $\Lambda$, a quasi-circle in $\partial_\infty\mathbb{H}^n$. Moreover, $\rho(\pi)$ is purely loxodromic, each element of $\rho(\pi)$ has an attracting and a repelling fixed point on $\Lambda$, and the centralizer of this element must contain a 1-parameter group of hyperbolic transformations having the same pair of fixed points. We use these 1-parameter groups to define twist deformations of $\rho$ along weighted simple closed curves, where the weight determines the translation length.

To pass from simple closed curves to measured laminations, we use the geodesic currents view of measured laminations \cite{BonahonCurrents}. Thus a measured lamination is a $\pi_1(S)$-invariant measure on $G(\widetilde{S}) =(\partial_\infty(\widetilde{S})\times\partial_\infty(\widetilde{S}) - \Delta)/\mathbb{Z}_2$ with $0$ self-intersection. Convergence of weighted simple closed curves to a measured lamination becomes convergence of measures on $G(\widetilde{S})$. Together with the homeomorphism $L:\partial_\infty(\widetilde{S})\rightarrow \Lambda$ we can show the following convergence result.
\begin{theorem_} (Corollary \ref{cor:convergence})
For any $\epsilon>0$, $\lambda\in\mathcal{ML}(S)$ an Anosov surface group representation $\rho$ into $SO(n,1)$, there is a neighborhood $U\ni \lambda$ such that for any two weighted simple closed curves $l_1, l_2\in U$, the corresponding representations $\mathcal{E}_{l_1}(\rho)$ and $\mathcal{E}_{l_2}(\rho)$ are $\epsilon$ close.
\end{theorem_}
\noindent This means that we can simply define $\mathcal{E}_\lambda(\rho)$ to be  $\lim_{i\rightarrow\infty}\mathcal{E}_{l_i}(\rho)$ whenever $(l_i)\rightarrow \lambda$, and we have a continuous map from $\mathcal{ML}(S)$ to the space of representations near $\rho$. This is a direct generalization of the classical earthquake on hyperbolic surfaces. But unlike in dimension 2, simple dimension count implies that for $n>2$, these earthquakes cannot take $\rho$ to all nearby points in the moduli space.

\subsection{Implications and further directions}
In \cite{McMullenComplexEq}, McMullen defined a notion of complex earthquake: starting from a fuchsian group $\Gamma \subset PSL(2,\mathbb{R})\subset PSL(2,\mathbb{C})$ it combines classical earthquake and Thurston's bending/grafting deformation and deform the original group to become quasifuchsian. Theorem \ref{thm:commute} and Corollary \ref{cor:convergence} implies that starting from a quasifuchsian $\Gamma$, along any measured lamination, both earthquake and Thurston's bending can be defined and they are commutative. Thus we can combine them into quake-bend deformations with complex parameters like in \cite{BonahonShearBend}. One can then ask whether it is possible to connect 2 quasifuchsian surface groups by 2 quake-bend operations. Thurston showed that we can do that starting from a fuchsian group.
\begin{theorem_} (Thurston, \cite{KTDefo})
The projective grafting map $Gr: \mathcal{ML}(S)\times \mathcal{T}(S)\rightarrow \mathcal{P}(S)$ is a homeomorphism.
\end{theorem_}
\noindent where $\mathcal{T}(S)$ is the Teichmuller space of hyperbolic structures whose holonomy are fuchsian, and  $\mathcal{P}(S)$ is the space of $\mathbb{C}P^1$ structures whose holonomy include all quasifuchsian groups.

We have Anosov surface groups in $G=SO(n,1)$ form an open subset $\mathcal{QF}_G(S)$ of the moduli space $\Hom(\pi_1(S), G)/G$ of representations up to conjugations, but they are in general not a whole component of this space. An interesting question is whether earthquake paths $\mathcal{E}_{t\lambda}(\rho)$ can go outside of the closure of $\mathcal{QF}_G(S)$.

Another interesting direction is the question of Fenchel-Nielsen coordinates for $SO(n,1)$ surface groups, in particular $SO(4,1)$. Tan \cite{TanComplexFN} and Kourouniotis \cite{KComplexLength} constructed complex Fenchel-Nielsen coordinates for the case of $PSL(2,\mathbb{C}) = SO^+(3,1)$. The situation in $SO(4,1)$ is more complicated since in some cases the bending parameter can be any $SO(3)$ rotation, and $SO(3)$ is not abelian. Moreover, by dimension count there may be up to $4$ dimensions of internal parameter for each pair-of-pants in the decomposition, these parameters specify the arrangement of rotation axes. Some work has been done in \cite{Tan4Hexagon} to study pair-of-pants groups in dimension 4, but the whole picture remains mysterious.

\medskip
\noindent{\bf{Acknowledgement.}}
I would like to thank Jean-Marc Schlenker and Virginie Charette for being my mentors during the writing of this article, and I thank my advisor Bill Goldman whose works are such an inspiration.


\section{Deformations along hypersurfaces}

\subsection{The Construction}
\label{sec:TheConstruction}
A manifold $M$ is said to be \emph{aspherical} if its universal cover $\widetilde{M}$ is contractible, or equivalently $\pi_k(M) = 0$ for all $k>1$. For the rest of this section let $M$ be a connected aspherical manifold (possibly with boundary) of dimension at least $2$ with finitely generated fundamental group, and let $\iota: N\hookrightarrow M$ be a connected properly embedded two-sided aspherical hypersurface such that $\iota_*:\pi_1(N)\rightarrow\pi_1(M)$ is injective. 

Let $\rho:\pi_1(M)\rightarrow G$  be a representation into an arbitrary Lie group $G$. We will define deformations of $\rho$ along the hypersurface $N$. An important example is when $M$ is a hyperbolic surface and $N$ is a simple closed geodesic, or more generally when $N$ is a totally geodesic hypersurface in a hyperbolic manifold $M$, in these cases our construction corresponds to Johnson-Millson's bending in \cite{JohnsonMillson}.

We let $\widehat{N}\subset \widetilde{M}$ denote the preimage of $N$ in the universal cover. Then $\widehat{N}$ is a disjoint union of a countable collection $\mathcal{N} =\{N_0, N_1, ...\}$ of connected components, each $N_i$ is a copy of $\widetilde{N}$ which is contractible and of codimension 1 in $\widetilde{M}$. So $\widetilde{M} - N_i$ has two components with a common boundary $N_i$.

\begin{definition}
We define the \emph{dual tree} $T$ to $N\subset M$ to be the tree whose vertices are connected components of $\widetilde{M}-\widehat{N}$ and edges are $N_i$. The adjacency of vertices and edges corresponds to the adjacency of components $\widetilde{M}-\widehat{N}$ with $N_i$'s.
\end{definition}
\noindent Each vertex of the tree $T$ possibly has an infinite number of edges attached. 
Deck transformation  action of $\pi_1(M)$ on $\widetilde{M}$ induce an action on $T$ which is transitive (but not free) on the set of vertices and the set of edges.

Choose a base point  $\tilde{x}_0\in\widetilde{M}$, with a corresponding base point $x_0\in M$ so that $x_0\not\in N$.
For all $A, B \in\pi_1(M, x_0)$, they act on $\widetilde{M}$ as a right action of deck transformations, so in our notation we have $\tilde{x}.(AB) = (\tilde{x}.A).B$ for $\tilde{x}\in\widetilde{M}$.
Now we will define an algebraic twist deformation of the representation $\rho:\pi_1(M, x_0)\rightarrow G$.

Let $y_0\in N$ a base point and choose a path from $x_0$ to $y_0$. By path concatenation this induces an injective homomorphism $\pi_1(N,y_0)\rightarrow \pi_1(M,x_0)$ whose image is a subgroup $T_0\subset \pi_1(M,x_0)$. The path $x_0$ to $y_0$ lifts uniquely to a path $\tilde{x}_0$ to $\tilde{y}_0$ in $\widetilde{M}$. We have $\tilde{y}_0$ is on a component of $\widehat{N}$ and we label this component $N_0$. The action of $T_0$ preserves $N_0$, that is, $N_0.A = N_0$ for all $A\in T_0$. For each component $N_i$ of $\widehat{N}$ we have $N_i = N_0. A_i$, so the subgroup $T_i = A_i^{-1} T_0 A_i$ preserves $N_i$.

The following is the crucial point of our construction. We define a map $\gamma:\mathcal{N}\rightarrow G$ with the following properties:
\begin{itemize}
\item[(1)] $\gamma(N_i)\rho(A) = \rho(A)\gamma(N_i)$ for all $A\in T_i$, that is, $\gamma(N_i)$ is in the centralizer of $\rho(T_i )$ in $G$,
\item[(2)] If $A\in\pi_1(M,x_0)$ such that $N_j = N_i. A$, then $\gamma(N_j) = \rho(A^{-1})\gamma(N_i) \rho(A)$.
\end{itemize}
For the rest of this article, $\gamma$ will be reserved to denote this map. Note that property (2) implies property (1), since $N_i = N_i. T_i$. 
We want to check that if the centralizer of $\rho(T_i)$ is non-trivial then a non-trivial $\gamma$ satisfying (2) exists.

We claim that choosing $\gamma(N_0)\ne 1$ in the centralizer of $\rho( T_0)$ uniquely determines $\gamma(N_i)$ for all $N_i\in\mathcal{N}$. For $N_i\in\mathcal{N}$, suppose that $N_i = N_0.A_i$, then we can use property (2) to define $\gamma(N_i) =  \rho(A_i^{-1})\gamma(N_0) \rho(A_i)$. Note that $A_i$ is not unique for each $N_i$. If we have $N_i = N_0.A_i' = N_0. A_i$ then $N_0.A_i (A_i')^{-1} = N_0$, so $A_i(A'_i)^{-1} \in  T_0 $. Since we chose $\gamma(N_0)$ in the centralizer of $\rho( T_0)$ we have 
$$
\begin{array}{rcl}
\gamma(N_0) & = & \rho(A_i' A_i^{-1})\gamma(N_0)\rho(A_i (A_i')^{-1}) \\
\rho(A_i'^{-1})\gamma(N_0)\rho(A_i') & = & \rho(A_i^{-1})\gamma(N_0)\rho(A_i)
\end{array}
$$
So $\gamma(N_i)$ is well-defined, not depending on different choices of $A_i$.

Now suppose that $N_j = N_i. B$, and we have $N_i = N_0. A_i$. So we can let $A_j =A_iB$, thus $B=A_i^{-1} A_j$.
We have 
$$\begin{array}{rcl}
\gamma(N_j) & = & \rho(A_j^{-1})\gamma(N_0)\rho(A_j) \\
 & = &  \rho(A_j^{-1} A_i)\rho(A_i^{-1}) \gamma(N_0)\rho(A_i)\rho(A_i^{-1} A_j) \\
 & = & \rho(B^{-1})\gamma(N_i) \rho(B).
\end{array}
$$
This is property (2). Therefore $\gamma$ as above is well-defined and satisfies (1) and (2).

We are now ready to define the twist deformation of representation $\rho$ with respect to $\gamma$. First we choose an orientation of $N\subset M$, which induces orientation of each $N_i\subset\widetilde{M}$.
\begin{definition}
\label{def:Earthquake}
Let $\gamma:\mathcal{N}\rightarrow G$ be defined as above. Given $A\in\pi_1(M,x_0)$, a (minimal) path in $\widetilde{M}$ with direction from $\tilde{x}_0$ to $\tilde{x}_0 . A$ will cross a sequence of hypersurfaces in $\mathcal{N}$ which we name $N_{a_1},...N_{a_k}$ in order. We define 
$$\mathcal{E}_{N,\gamma}(\rho)(A) = \mathcal{E}(\rho)(A) := \rho(A) \left[\gamma(N_{a_k})^{s_k}...\gamma(N_{a_1})^{s_1}\right]$$
where each $s_i = \pm 1$ and equals the intersection sign between $N_i$ and the directed path  $\tilde{x}_0$ to $\tilde{x}_0 . A$.
\end{definition}
\noindent We say that the deformation is non-trivial if $\gamma(N_i)\ne 1$.
\begin{remark}
\label{rem:non-minimal}
Note that it is not necessary for the path from $\tilde{x}_0$ to $\tilde{x}_0 A$ to be minimal since any overlapping would be cancelled out in the expression for $\mathcal{E}(\rho)(A)$, this is because of the tree structure of $\widetilde{M}-\widehat{N}$. The intersection sign between $N_i$ and the path depends on the chosen orientation of $M$ and $N$. There is a dual definition of $\mathcal{E}(\rho)$ obtained by changing the orientation of $M$, or changing the orientation of $N\subset M$. Both are equivalent to switching between a ``left'' and a ``right'' Fenchel-Nielsen twist for the case of surface groups in $G=\Isom^+(\mathbb{H}^2)$.
\end{remark}

From the above definition we have a well-defined map $\mathcal{E}(\rho):\pi_1(M,x_0)\rightarrow G$, because a minimal path  from $\tilde{x}_0$ to $\tilde{x}_0 . A$ induces a minimal path in the dual tree $T$ and which is unique, so the sequence of lines $N_{a_1},...,N_{a_k}$ is uniquely determined. The following proposition shows that $\mathcal{E}(\rho):\pi_1(M,x_0)\rightarrow G$ is a homomorphism.
\begin{proposition}
Let $A,B\in\pi_1(M,x_0)$ and $\mathcal{E}(\rho)$ as defined above. Then 
\begin{enumerate}[(i)]
\item $\mathcal{E}(\rho)(AB) = \mathcal{E}(\rho)(A)\mathcal{E}(\rho)(B),$
\item $\mathcal{E}(\rho)(A^{-1}) = \mathcal{E}(\rho)(A)^{-1}.$
\end{enumerate}
Therefore $\mathcal{E}(\rho)$ is a homomorphism.
\end{proposition}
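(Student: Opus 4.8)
The plan is to isolate a ``crossing cocycle'' attached to pairs of points, establish its concatenation and equivariance properties, and then reduce both claims to these two facts together with the tree structure of $\widetilde{M}\setminus\widehat{N}$. For any $\tilde{p},\tilde{q}\in\widetilde{M}\setminus\widehat{N}$ I would write $w(\tilde{p},\tilde{q}):=\gamma(N_{c_m})^{t_m}\cdots\gamma(N_{c_1})^{t_1}$, where $N_{c_1},\dots,N_{c_m}$ are the components of $\widehat{N}$ crossed, in order, by a directed path from $\tilde{p}$ to $\tilde{q}$, and $t_i=\pm1$ are the intersection signs. As in Remark \ref{rem:non-minimal}, a minimal path descends to the unique reduced edge-path in the dual tree $T$ between the vertices containing $\tilde{p}$ and $\tilde{q}$, so $w(\tilde{p},\tilde{q})$ is well defined; and any non-minimal path gives the same element, since a backtrack across a single $N_i$ contributes $\gamma(N_i)^{t}\gamma(N_i)^{-t}=1$. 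In this notation $\mathcal{E}(\rho)(A)=\rho(A)\,w(\tilde{x}_0,\tilde{x}_0.A)$.

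Next I would record two lemmas. \emph{Concatenation:} reading off the crossings of a path $\tilde{p}\to\tilde{q}$ followed by a path $\tilde{q}\to\tilde{r}$, and remembering that later crossings sit to the \emph{left} in the defining product, gives
\[
w(\tilde{p},\tilde{r}) = w(\tilde{q},\tilde{r})\,w(\tilde{p},\tilde{q}).
\]
\emph{Equivariance:} the deck transformation $R_C\colon\tilde{x}\mapsto\tilde{x}.C$ carries a path $\tilde{p}\to\tilde{q}$ to a path $\tilde{p}.C\to\tilde{q}.C$, sends each crossed component $N_i$ to $N_i.C$, and (because the chosen coorientation of $N\subset M$ lifts $R_C$-equivariantly) preserves every intersection sign; substituting property $(2)$, namely $\gamma(N_i.C)=\rho(C^{-1})\gamma(N_i)\rho(C)$, into the product and telescoping the interior $\rho(C)\rho(C^{-1})$ factors yields
\[
w(\tilde{p}.C,\tilde{q}.C) = \rho(C^{-1})\,w(\tilde{p},\tilde{q})\,\rho(C).
\]

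For $(i)$ the essential move is to decompose the path from $\tilde{x}_0$ to $\tilde{x}_0.AB$ through the intermediate point $\tilde{x}_0.B$ (not $\tilde{x}_0.A$), because $R_B$ sends the pair $(\tilde{x}_0,\tilde{x}_0.A)$ to $(\tilde{x}_0.B,(\tilde{x}_0.A).B)=(\tilde{x}_0.B,\tilde{x}_0.AB)$. Applying concatenation and then equivariance with $C=B$ gives
\[
w(\tilde{x}_0,\tilde{x}_0.AB) = w(\tilde{x}_0.B,\tilde{x}_0.AB)\,w(\tilde{x}_0,\tilde{x}_0.B) = \rho(B^{-1})\,w(\tilde{x}_0,\tilde{x}_0.A)\,\rho(B)\,w(\tilde{x}_0,\tilde{x}_0.B).
\]
Multiplying on the left by $\rho(AB)=\rho(A)\rho(B)$ collapses $\rho(B)\rho(B^{-1})$ and produces exactly $\rho(A)\,w(\tilde{x}_0,\tilde{x}_0.A)\,\rho(B)\,w(\tilde{x}_0,\tilde{x}_0.B)=\mathcal{E}(\rho)(A)\mathcal{E}(\rho)(B)$.

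Finally, since the path from $\tilde{x}_0$ to $\tilde{x}_0.1=\tilde{x}_0$ is trivial, $w(\tilde{x}_0,\tilde{x}_0)=1$ and $\mathcal{E}(\rho)(1)=1$, so $(ii)$ follows from $(i)$ by taking $B=A^{-1}$ (or, equivalently, by a direct computation using equivariance with $C=A^{-1}$). I expect the only genuinely delicate points to be the two lemmas above: keeping the non-commutative product in the correct order, so that the last surface crossed is leftmost, and verifying that $R_C$ preserves intersection signs via the equivariant coorientation. The one conceptual subtlety is the choice of intermediate point $\tilde{x}_0.B$: because deck transformations act on the right, the naive choice $\tilde{x}_0.A$ would send $\tilde{x}_0.B$ to $\tilde{x}_0.BA\neq\tilde{x}_0.AB$, and the equivariance step would not close.
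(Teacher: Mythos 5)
Your proposal is correct and follows essentially the same route as the paper: both decompose the path from $\tilde{x}_0$ to $\tilde{x}_0.AB$ through the intermediate point $\tilde{x}_0.B$, use property (2) of $\gamma$ to identify the conjugated factors $\rho(B^{-1})\gamma(N_{a_i})^{\pm 1}\rho(B)$ with the translated hypersurfaces $N_{a_i}.B$ lying between $\tilde{x}_0.B$ and $\tilde{x}_0.AB$, and invoke the tree structure of $\widetilde{M}-\widehat{N}$ to cancel backtracking. The only cosmetic differences are organizational: you package path-independence, concatenation, and equivariance as standalone lemmas about the cocycle $w$, and you deduce (ii) formally from (i) via $\mathcal{E}(\rho)(1)=1$, whereas the paper repeats the direct computation for (ii).
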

\begin{proof}\hfill

\noindent $(i)$  We use notations as before in this section. Let $N_{a_1},...,N_{a_p}$ be the sequence of hypersurfaces between $\tilde{x}_0$ and $\tilde{x}_0 A$, and $N_{b_1},...,N_{b_q}$ be the sequence of hypersurfaces between $\tilde{x}_0$ and $\tilde{x}_0 B$, and let $N_{c_1},...,N_{c_r}$ be the sequence of hypersurfaces between $\tilde{x}_0$ and $\tilde{x}_0 AB$. Suppose that the definition of $\mathcal{E}(\rho)$ gives us

$\mathcal{E}(\rho)(A) = \rho(A)\alpha_p...\alpha_1$

$\mathcal{E}(\rho)(B) = \rho(B)\beta_q...\beta_1$

$\mathcal{E}(\rho)(AB) = \rho(A)\rho(B)\delta_r ...\delta_1$

\noindent Where $\alpha_i = \gamma(N_{a_i})^{\pm 1}$, $\beta_i = \gamma(N_{b_i})^{\pm 1}$, and $\delta_i = \gamma(N_{c_i})^{\pm 1}$, with the signs determined by the corresponding intersection signs.

\begin{figure} [h] 
 \begin{center}
    \includegraphics[width=10cm]{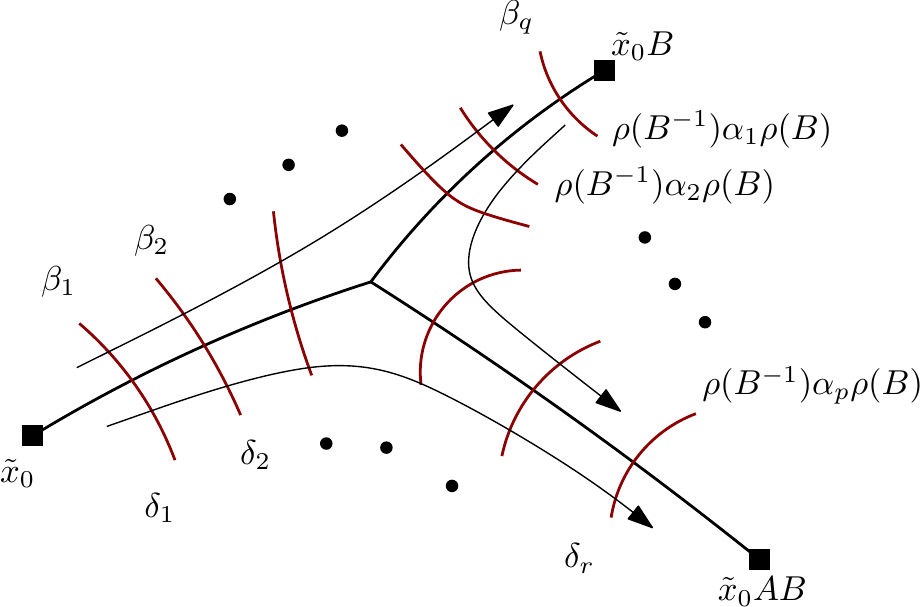}
    \caption{}
    \label{fig:HomomorphismLemma}
  \end{center}
\end{figure}

We have 

$\mathcal{E}(\rho)(A)\mathcal{E}(\rho)(B)$

$=\rho(A)\alpha_p...\alpha_1\rho(B)\beta_q...\beta_1
$

$ = \rho(A)\rho(B) [\underbrace{(\rho(B^{-1})\alpha_p\rho(B))}_{}... \underbrace{(\rho(B^{-1})\alpha_1\rho(B))}_{} \beta_q ... \beta_1 ]$

Indeed $\beta_1,...,\beta_q$ are elements in $G$ associated with hypersurfaces $N_{b_1},...,N_{b_q}$ which are between $\tilde{x}_0$ and $\tilde{x}_0. B$, and $\rho(B^{-1})\alpha_1\rho(B),...,\rho(B^{-1})\alpha_p\rho(B)$ are elements associated with hypersurfaces $N_{a_1}.B, ..., N_{a_p}.B$ which are between $\tilde{x}_0 .B$ and $\tilde{x}_0 .AB$. On the other hand, $\delta_1,...,\delta_r$ are elements associated with the sequence of hypersurfaces between $\tilde{x}_0$ and $\tilde{x}_0 .AB$. 
Consider the tree structure of the dual tree $T$ (see Figure \ref{fig:HomomorphismLemma}). Indeed the difference between a minimal path from $\tilde{x}_0$ to $\tilde{x}_0 AB$ and a concatenation of 2 minimal paths $\tilde{x}_0$ to $\tilde{x}_0 B$ to $\tilde{x}_0 AB$ is a segment of back tracking, and along that segment which crosses let's say $m$ hypersurfaces, we must have cancellation between $\rho(B^{-1})\alpha_m\rho(B)...\rho(B^{-1})\alpha_1\rho(B)$ and $\beta_q ... \beta_{q-m+1}$.

So
$$ [{\rho(B^{-1})\alpha_p\rho(B)}... {\rho(B^{-1})\alpha_1\rho(B)}]\beta_q ... \beta_1 
= \delta_r...\delta_1.
$$
Therefore $\mathcal{E}(\rho)(A)\mathcal{E}(\rho)(B) = \mathcal{E}(\rho)(AB)$.

\medskip

\noindent$(ii)$ A similar argument as above shows that

$\mathcal{E}(\rho)(A^{-1})$

 $ = \rho(A^{-1} )\underbrace{\rho(A)\alpha_1^{-1}\rho(A)^{-1}}_{}...\underbrace{\rho(A)\alpha_k^{-1}\rho(A)^{-1}}_{}$

$= \alpha_1^{-1}...\alpha_k^{-1}\rho(A^{-1})$

$= \mathcal{E}(\rho)(A)^{-1} .$

\end{proof}
For the case of simple closed curves in surfaces we have the following.
\begin{proposition}
Let $M$ be a closed surface of genus $g>1$ and let $N, L$ be disjoint simple closed curves in distinct homotopy classes, and let $\rho:\pi_1(M)\rightarrow G$ be a representation such that $\rho(\pi_1(N))$ and $\rho(\pi_1(L))$ have non-trivial centralizers. Then non-trivial deformations (as constructed in definition \ref{def:Earthquake}) of $\rho$ along $N$ and $L$ are distinct.
\end{proposition}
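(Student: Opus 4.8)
\emph{Overall strategy.} The plan is to exhibit a single element of $\pi_1(M)$ on which the two homomorphisms disagree. Writing the two deformations of Definition~\ref{def:Earthquake} in cocycle form, $\mathcal{E}_{N,\gamma}(\rho)(A)=\rho(A)\,u_N(A)$ and $\mathcal{E}_{L,\alpha}(\rho)(A)=\rho(A)\,u_L(A)$ with $u_N(A)=\gamma(N_{a_k})^{s_k}\cdots\gamma(N_{a_1})^{s_1}$, it suffices to find one $A$ with $u_N(A)\neq u_L(A)$. The basic observation is that, by Remark~\ref{rem:non-minimal}, $u_N(A)=1$ as soon as $A$ is represented by a loop disjoint from $N$, since then a representing path can be lifted so as to cross no lift of $N$; symmetrically for $u_L$. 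Hence if I can produce a loop $\delta$ with $i(\delta,L)=0$ and $u_N(\delta)\neq 1$, then $\mathcal{E}_{L,\alpha}(\rho)(\delta)=\rho(\delta)\neq\rho(\delta)\,u_N(\delta)=\mathcal{E}_{N,\gamma}(\rho)(\delta)$, and the two deformations are distinct.

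\emph{Producing the test curve.} First I would find $\delta$ topologically. Since $N$ and $L$ are disjoint, $N$ lies in the component $\Sigma$ of $M\setminus L$ containing it; because $N$ is essential and \emph{not} homotopic to $L$, it is essential and non-peripheral in $\Sigma$, and since $g>1$ this component has negative Euler characteristic. Hence there is a simple closed curve $\delta\subset\Sigma$ with $i(\delta,N)>0$ and, by construction, $i(\delta,L)=0$. When $N$ is non-separating in $\Sigma$ one can take $\delta$ meeting $N$ exactly once; then $u_N(\delta)=\gamma(N_{a_1})^{\pm1}$ is a single conjugate of $\gamma(N_0)$, which is nontrivial because the deformation is nontrivial, and we are done. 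By symmetry the same argument applies if one can instead arrange a curve meeting $L$ once and disjoint from $N$.

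\emph{The main obstacle.} The hard case is when $N\cup L$ separates $M$ — equivalently $N$ and $L$ are dependent in $H_1(M;\mathbb{Z}/2)$ — for then every loop disjoint from $L$ meets $N$ an even number of times with cancelling signs, and symmetrically, so no single-crossing test curve exists. Here $\delta$ crosses $N$ in pairs and $u_N(\delta)$ is a product such as $\gamma(N_{a_2})^{-1}\gamma(N_{a_1})=\rho(B)^{-1}\gamma(N_{a_1})^{-1}\rho(B)\,\gamma(N_{a_1})$, where $B$ is the deck transformation carrying $N_{a_1}$ to $N_{a_2}$ (so $\gamma(N_{a_2})=\rho(B)^{-1}\gamma(N_{a_1})\rho(B)$ by property (2)); this vanishes precisely when $\rho(B)$ commutes with $\gamma(N_{a_1})$. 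The crux is therefore to choose $\delta$ so that the associated $B$ does \emph{not} centralise $\gamma(N_0)$. I expect this to be where the real content lies: for faithful $\rho$ — and, in the setting of Section~3, discrete Anosov $\rho$ — the deck transformations realised by such loops are abundant enough that some $\rho(B)$ fails to commute with the fixed nontrivial $\gamma(N_0)$, which I would make precise using property (2) together with the discreteness of $\rho(\pi_1(M))$. Note that some nondegeneracy of $\rho$ is genuinely needed: for a trivial $\rho$ with both $N$ and $L$ separating, all crossing-products collapse and the two deformations coincide, so the statement should be read with $\rho$ faithful (as in the main situations of this paper).
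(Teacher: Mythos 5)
Your overall strategy is exactly the paper's: exhibit one test element whose representing loop is disjoint from one curve (so that, by path-independence of the crossing product, that deformation fixes its $\rho$-image) but crosses the other, and conclude the two homomorphisms differ on it. The paper's entire proof is your ``easy case'': it cuts $M$ along $N$, places the base point in the component of the cut surface containing $L$, chooses $A$ represented by a loop meeting $L$ and not touching the boundary, notes $\mathcal{E}_N(\rho)(A)=\rho(A)$, and then simply \emph{asserts} $\mathcal{E}_L(\rho)(A)\neq\rho(A)$. You are in fact more careful than the paper on the two delicate points: you justify when a curve crossing exactly once exists (non-separating case in the complementary subsurface), and you observe that otherwise the crossing product degenerates into commutator-type expressions $\rho(B)^{-1}\gamma(N_{a_1})^{-1}\rho(B)\,\gamma(N_{a_1})$ that have no a priori reason to be nontrivial.

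The ``main obstacle'' you leave open is therefore a genuine gap in your proposal, but you should know it is equally a gap in the paper's own proof: the asserted inequality $\mathcal{E}_L(\rho)(A)\neq\rho(A)$ is precisely the unproved claim, and in the configurations you flag (both curves separating, or disjoint homologous non-separating curves) no once-crossing test loop exists on either side, so the assertion genuinely needs an argument the paper does not supply. Moreover, your closing observation shows the gap cannot be closed under the stated hypotheses: for trivial $\rho$ (which satisfies them, since the centralizer of the trivial subgroup is all of $G$) with $N,L$ separating, all $\gamma(N_i)$ coincide with a single $c\neq 1$, every crossing product is $c^{\hat{\imath}(A,N)}=1$, and both deformations equal $\rho$; the same collapse occurs even for \emph{faithful, discrete} $\rho$ if one takes $\gamma(N_0)$ and $\gamma(L_0)$ in the center of $G$ (e.g.\ $\gamma=-I$ for a lift of a Fuchsian group to $SL(2,\mathbb{R})$), so faithfulness alone — your proposed repair — is still not sufficient; one needs a hypothesis forcing the relevant commutators to be nontrivial, e.g.\ that $\gamma(N_0)$ fails to centralize the $\rho$-image of a complementary side of $N$, and similarly for $L$. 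One small correction to your case analysis: homological dependence of $[N],[L]$ over $\mathbb{Z}/2$ is not symmetric in the way you state. If $N$ separates but $L$ does not, then $L$ is still non-separating in the cut surface and a once-crossing curve for $L$ avoiding $N$ exists (your own symmetry remark covers this); the truly problematic configurations are exactly ``both separating'' and ``$[N]=[L]\neq 0$''.
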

\begin{proof}
We cut the surface $M$ along $N$ and obtain a surface $M'$ with boundary which still contain $L$, so we can choose a base point $x_0\in M$ in the same component containing $L$ (if $N$ is a separating curve). Note that the representation $\mathcal{E}_\rho$ only changes by a conjugation as the base point changes. Now we can choose $A\in\pi_1(M,x_0)$ represented by a curve intersecting $L$ and not touching the boundary. Thus $\mathcal{E}_N(\rho)(A) = \rho(A)$ while $\mathcal{E}_L(\rho)(A) \ne \rho(A)$. Thus $\mathcal{E}_N(\rho)\ne\mathcal{E}_L(\rho)$.
\end{proof}

\subsection{Deformation along multi-hypersurfaces}
\label{sec:multi-hyp}

In this section we will define deformations along a set of disjoint hypersurfaces - the generalization of deforming a surface group along multicurves. A theorem on commutativity of deformations will be shown.

Let $L = N_{(1)}\cup ...\cup N_{(n)} $ be a collection of mutually disjoint connected hypersurfaces of $M$, each satisfies properties of $N$ in previous section. Moreover we require that no pair $N_{(i)}, N_{(j)}$ are homotopic.  Let
 $$\mathcal{L} = \{ L_1, L_2, ...\} = \bigcup_{i=1}^n \{N_{(i), 1}, N_{(i), 2}, ... \}$$ be the collection of components of pre-images of $N_{(1)},..., N_{(n)}$ in $\widetilde{M}$. As before, $T_i\subset \pi_1(M,x_0)$ is the subgroup preserving $L_i$.

 Let $\gamma:\mathcal{L}\rightarrow G$ be such that
 \begin{itemize}
\item[(1)] $\gamma(L_i)\rho(A) = \rho(A)\gamma(L_i)$ for all $A\in T_i$
\item[(2)] If $A\in\pi_1(M,x_0)$ such that $L_j = L_i. A$, then $\gamma(L_j) = \rho(A^{-1})\gamma(L_i) \rho(A)$.
\end{itemize}
Suppose $L_1,...,L_n$ are $N_{(1),0}, ..., N_{(n),0}$ in order. Then indeed, $\gamma$ is determined by $\gamma(L_1),..., \gamma(L_n)$. 
 
 The following definition and proposition are straight forward generalization of previous section.
\begin{definition}
\label{def:EarthquakeMulti}
Let  $L = N_{(1)}\cup ...\cup N_{(n)} $ and  $\gamma:\mathcal{L}\rightarrow G$ be defined as above. Given $A\in\pi_1(M,x_0)$, a (minimal) path in $\widetilde{M}$ with direction from $\tilde{x}_0$ to $\tilde{x}_0 . A$ will cross a sequence of hypersurfaces in $\mathcal{L}$ which we name $L_{a_1},... , L_{a_k}$ in order. We define 
$$\mathcal{E}_{L,\gamma}(\rho)(A) = \mathcal{E}(\rho)(A) := \rho(A) \left[\gamma(L_{a_k})^{s_k}...\gamma(L_{a_1})^{s_1}\right]$$
where each $s_i = \pm 1$ and equals the intersection sign between $L_i$ and the directed path  $\tilde{x}_0$ to $\tilde{x}_0 . A$.
\end{definition}
\begin{proposition}
$\mathcal{E}_{L,\gamma}(\rho)(AB) = \mathcal{E}_{L,\gamma}(\rho)(A) \mathcal{E}_{L,\gamma}(\rho)(B)$. 
Deformation along multiple disjoint hypersurfaces gives $\mathcal{E}_{L,\gamma}(\rho)$ a homomorphism.
\end{proposition}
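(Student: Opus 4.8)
The plan is to reduce everything to the single-hypersurface case by isolating the one structural input that the earlier homomorphism proposition actually used: that the dual graph of the cut-up universal cover is a tree. First I would form the \emph{dual graph} $T_L$ attached to $L$ exactly as the dual tree $T$ was defined for a single hypersurface — vertices are the connected components of $\widetilde{M}-\widehat{L}$, where $\widehat{L}=\bigcup_i L_i$ is the full preimage of $L$, edges are the components $L_i\in\mathcal{L}$, and incidence is given by adjacency. The claim to verify is that $T_L$ is again a tree, for then the minimal path from $\tilde{x}_0$ to $\tilde{x}_0.A$ determines a unique geodesic in $T_L$, so the ordered crossing sequence $L_{a_1},\dots,L_{a_k}$ and hence the word $\gamma(L_{a_k})^{s_k}\cdots\gamma(L_{a_1})^{s_1}$ are well-defined.

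Second, I would prove the tree claim. Each $L_i$ is a lift of some $N_{(j)}$, hence a connected, properly embedded, two-sided aspherical hypersurface satisfying the same hypotheses as $N$; therefore $\widetilde{M}-L_i$ has exactly two components, i.e. every edge of $T_L$ separates $\widetilde{M}$. Connectedness of $T_L$ follows from path-connectedness of $\widetilde{M}$, since any path between two vertex-regions crosses finitely many of the (pairwise disjoint) $L_i$ transversely and projects to an edge-path. For acyclicity, suppose $T_L$ contains a simple cycle $v_0,e_1,v_1,\dots,e_m,v_0$ with distinct edges. The edge $e_1=L_{e_1}$ splits $\widetilde{M}$ into two half-spaces $X^+,X^-$ with $v_1\subset X^+$ and $v_0\subset X^-$; but the remainder $v_1,e_2,\dots,e_m,v_0$ is an edge-path from $X^+$ to $X^-$ that does not use $e_1$, which is impossible because crossing from $X^+$ to $X^-$ forces a crossing of $L_{e_1}$. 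Hence $T_L$ is a tree.

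Third, with $T_L$ established as a tree, the proof of the homomorphism identities is verbatim the single-hypersurface argument. The only algebraic fact needed beyond the tree structure is the conjugation identity $\rho(B^{-1})\gamma(L_i)\rho(B)=\gamma(L_i.B)$, which is precisely property (2) of the multi-hypersurface $\gamma$ (and $\gamma$ is well-defined on all of $\mathcal{L}$ from the choices $\gamma(L_1),\dots,\gamma(L_n)$ as noted before the definition). I would then repeat the computation: expand $\mathcal{E}(\rho)(A)\mathcal{E}(\rho)(B)=\rho(A)\alpha_p\cdots\alpha_1\,\rho(B)\beta_q\cdots\beta_1$, move the $\alpha_i$ across $\rho(B)$ so that each $\alpha_i$ becomes the $\gamma$-value of the translate $L_{a_i}.B$ lying between $\tilde{x}_0.B$ and $\tilde{x}_0.AB$, and observe that the concatenated path $\tilde{x}_0\to\tilde{x}_0.B\to\tilde{x}_0.AB$ differs from the minimal path $\tilde{x}_0\to\tilde{x}_0.AB$ only by a back-tracking segment in $T_L$. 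Along that segment each hypersurface is crossed once with each sign, so the matching $\gamma^{+1}$ and $\gamma^{-1}$ cancel in pairs, leaving $\delta_r\cdots\delta_1$ and hence multiplicativity; the inverse statement is the same cancellation applied to the reversed path.

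The main obstacle is the acyclicity step, and specifically ensuring that mixing lifts of \emph{different} hypersurfaces $N_{(j)}$ does not create a cycle in $T_L$. This is exactly where disjointness of the $N_{(i)}$ (which makes all the $L_i$ pairwise disjoint, so that the crossing sequence and the separating argument make sense) and the two-sidedness hypothesis (which makes each $L_i$ separating) are used; without separation two vertex-regions could be joined by two distinct edges and the word would fail to be path-independent. Everything after the tree claim is formal and identical to the already-proved single-hypersurface proposition.
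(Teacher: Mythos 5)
Your proof is correct and follows essentially the same route as the paper: the paper justifies this proposition only by declaring it a ``straight forward generalization'' of the single-hypersurface case, i.e.\ the cancellation argument on the dual tree, which is exactly what you carry out. Your one genuine addition is to verify explicitly that the dual graph of $\widetilde{M}-\widehat{L}$ is still a tree (via the separation property of each $L_i$ and disjointness of the lifts), a point the paper leaves implicit.
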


We will need the following discussion to prove commutativity of deformations. Let $x_0, x'_0$ be different base points on $M$, $x_0, x'_0\not\in L$. Choose a path from $x_0$ to $x'_0$ which determine 
a lift $\tilde{x}_0$ to $\tilde{x}'_0$ on $\widetilde{M}$. We will deform $\rho:\pi_1(M, x_0)\rightarrow G$ in two different ways: the first using definition \ref{def:EarthquakeMulti} and base point $x_0$ resulting in $\mathcal{E}_{L,\gamma}(\rho)$; the second way of deforming is by using $\tilde{x}'_0$ as base point. Let $\mathcal{E}'_{L,\gamma}(\rho)$ be the resulting representation obtained by using $\tilde{x}'_0$ as the base point as follows: for $A\in\pi_1(M, x_0)$, we use the (possibly non-minimal) path from $\tilde{x}'_0$ to $\tilde{x}'_0 A$ obtained by going from $\tilde{x}'_0$ to $\tilde{x}_0$, then to $\tilde{x}_0 A$, then to $\tilde{x}'_0 A$. By remark \ref{rem:non-minimal} we see that taking a non-minimal path does not change the deformation result. We will now show that the difference between $\mathcal{E}'_{L,\gamma}(\rho)$ and $\mathcal{E}_{L,\gamma}(\rho)$ is conjugating by some transformation in $G$.
\begin{lemma}
\label{lem:x0prime}
Let the minimal path from $\tilde{x}_0$ to $\tilde{x}'_0$ cross $L_1,...,L_k$ with intersection sign $s_1,..., s_k$. Then
$$ \mathcal{E'}_{L,\gamma}(\rho)(A) =  \left(\prod_{i=k}^1 \gamma(L_i)^{s_i}\right) \mathcal{E}_{L,\gamma}(\rho)(A) \left(\prod_{i=k}^1 \gamma(L_i)^{s_i}\right)^{-1}$$
\end{lemma}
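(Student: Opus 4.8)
The plan is to evaluate $\mathcal{E}'_{L,\gamma}(\rho)(A)$ directly on the prescribed concatenated path and then reorganize the resulting word of $\gamma$-factors using the equivariance property (2). First I would decompose the path from $\tilde{x}'_0$ to $\tilde{x}'_0 A$ into its three segments: the reversed segment $\tilde{x}'_0 \to \tilde{x}_0$, the middle segment $\tilde{x}_0 \to \tilde{x}_0 A$, and the final segment $\tilde{x}_0 A \to \tilde{x}'_0 A$. By hypothesis the minimal path $\tilde{x}_0 \to \tilde{x}'_0$ crosses $L_1, \dots, L_k$ with signs $s_1, \dots, s_k$; reversing it gives crossings $L_k, \dots, L_1$ with signs $-s_k, \dots, -s_1$. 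The middle segment produces exactly the crossings defining $\mathcal{E}_{L,\gamma}(\rho)(A)$, say $L_{a_1}, \dots, L_{a_m}$ with signs $t_1, \dots, t_m$. The final segment is the $A$-translate of the forward path $\tilde{x}_0 \to \tilde{x}'_0$, so it crosses $L_1 A, \dots, L_k A$ in order; since deck transformations preserve the orientations of $M$ and of the hypersurfaces, the intersection signs are unchanged, $s_1, \dots, s_k$. Remark \ref{rem:non-minimal} guarantees that using this non-minimal path is legitimate.

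Next I would apply Definition \ref{def:EarthquakeMulti} (with base point $\tilde{x}'_0$) to read off $\mathcal{E}'_{L,\gamma}(\rho)(A) = \rho(A)\, B_3\, B_2\, B_1$, where each block records one segment's crossings in reverse order: $B_3 = \gamma(L_k A)^{s_k} \cdots \gamma(L_1 A)^{s_1}$, $B_2 = \gamma(L_{a_m})^{t_m} \cdots \gamma(L_{a_1})^{t_1}$, and $B_1 = \gamma(L_1)^{-s_1} \cdots \gamma(L_k)^{-s_k}$. The key move is to rewrite $B_3$ using property (2): since $L_i A$ is the $A$-translate of $L_i$, we have $\gamma(L_i A)^{s_i} = \rho(A)^{-1} \gamma(L_i)^{s_i} \rho(A)$, and the interior factors $\rho(A)\rho(A)^{-1}$ telescope to give $B_3 = \rho(A)^{-1} \big(\gamma(L_k)^{s_k} \cdots \gamma(L_1)^{s_1}\big) \rho(A)$.

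Substituting this, the leading $\rho(A)\rho(A)^{-1}$ cancels, the conjugating $\rho(A)$ recombines with $B_2$ to reconstitute $\rho(A) B_2 = \mathcal{E}_{L,\gamma}(\rho)(A)$, and $B_1$ is precisely the inverse of the word $\gamma(L_k)^{s_k}\cdots\gamma(L_1)^{s_1} = \prod_{i=k}^1 \gamma(L_i)^{s_i}$. This yields exactly the claimed conjugation formula. I expect the only real obstacle to be the \emph{bookkeeping}: keeping straight the reversal of order and flipping of signs on the backward segment, the product-in-reverse-crossing-order convention of Definition \ref{def:EarthquakeMulti}, and, most importantly, verifying that the final segment's sign pattern agrees with the forward path (which is what makes $B_3$ the conjugate of the first-segment word, so that the telescoping and the recombination into $\mathcal{E}_{L,\gamma}(\rho)(A)$ actually occur rather than leaving uncancelled $\rho(A)$ terms). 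Everything else is the mechanical telescoping enabled by property (2).
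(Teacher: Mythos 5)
Your proposal is correct and takes essentially the same route as the paper's own proof: both evaluate $\mathcal{E}'_{L,\gamma}(\rho)(A)$ on the concatenated path, record the crossing sequence $L_k,\dots,L_1,L_{a_1},\dots,L_{a_r},L_1A,\dots,L_kA$, and use property (2) to telescope the $A$-translated block into $\rho(A)^{-1}\left(\prod_{i=k}^1\gamma(L_i)^{s_i}\right)\rho(A)$, after which the factors recombine into the claimed conjugation. Your explicit verification that deck translation preserves the intersection signs on the final segment is a detail the paper leaves implicit, but otherwise the two arguments coincide.
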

\begin{proof}
 $L_1,...,L_k$ are the hypersurfaces between $\tilde{x}_0$ and $\tilde{x}'_0$ in order, so $L_1 A,... , L_k A$ are between $\tilde{x}_0 A$ and $\tilde{x}'_0 A$. Let $L_{a_1},... , L_{a_r}$ be the hypersurfaces between $\tilde{x}_0$ and $\tilde{x}_0 A$. We have the path from $\tilde{x}'_0$ to $\tilde{x}_0$ to $\tilde{x}_0 A$ to $\tilde{x}'_0 A$ crosses the following hypersurfaces 
$$L_k, ... , L_1, L_{a_1}, ... , L_{a_r}, L_1 A, ... , L_k A.$$
So using property (2) of $\gamma$ we get
 $$\mathcal{E}'_{L,\gamma}(\rho)(A) = \rho(A)\left( \prod_{i=k}^1 \rho(A^{-1}) \gamma(L_i)^{s_i}\rho(A)\right) \left(\prod_{i=r}^1\gamma(L_{a_i})^{s_{a_i}}\right)\left(\prod_{i=1}^k \gamma(L_i)^{-s_i}\right)
 $$
 $$ =  \left(\prod_{i=k}^1 \gamma(L_i)^{s_i}\right)\rho(A)\left(\prod_{i=r}^1\gamma(L_{a_i})^{s_{a_i}}\right) \left(\prod_{i=k}^1 \gamma(L_i)^{s_i}\right)^{-1}
 $$
 and we get the result.
\end{proof}
\begin{lemma}
\label{lem:conjugateTi} Let $L_a\in\mathcal{L}$ be hypersurface and $T_a\subset \pi_1(M,x_0)$ be the subgroup preserving it. Suppose the minimal path from $\tilde{x}_0$ to $L_a$ crosses $L_1,..., L_k$ in order (not including $L_a$), then
$$ \mathcal{E}_{L,\gamma}(\rho)(T_a) = \left(\prod_{i=k}^1 \gamma(L_i)^{s_i}\right)^{-1} \rho(T_a) \left(\prod_{i=k}^1 \gamma(L_i)^{s_i}\right)$$
\end{lemma}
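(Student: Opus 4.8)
The plan is to deduce this from Lemma \ref{lem:x0prime} by shifting the base point so that it sits right next to the hypersurface $L_a$. Write $P := \prod_{i=k}^1 \gamma(L_i)^{s_i}$ for the transformation attached to the path from $\tilde{x}_0$ to $L_a$. I would introduce an auxiliary base point $\tilde{x}'_0$ lying in the component $V$ of $\widetilde{M} - \widehat{L}$ that is adjacent to $L_a$ on the side reached by this path, and close enough to $L_a$ that the minimal path from $\tilde{x}_0$ to $\tilde{x}'_0$ crosses exactly $L_1, \ldots, L_k$ with the same signs $s_1, \ldots, s_k$ (and does not cross $L_a$). With this choice the product $P$ occurring in Lemma \ref{lem:x0prime} coincides with the one in the present statement.

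The key geometric point I would establish is that, for every $A \in T_a$, the deformation computed from the base point $\tilde{x}'_0$ is trivial, i.e. $\mathcal{E}'_{L,\gamma}(\rho)(A) = \rho(A)$. Indeed, $A$ preserves the hypersurface $L_a$, and since $A$ is a deck transformation respecting the chosen transverse orientation of $L_a$, it preserves each of the two components of $\widetilde{M} - \widehat{L}$ abutting $L_a$; in particular $\tilde{x}'_0 . A \in V$. As $V$ is a single component of $\widetilde{M} - \widehat{L}$, it contains no hypersurface of $\mathcal{L}$ in its interior, so the path from $\tilde{x}'_0$ to $\tilde{x}'_0 . A$ taken inside $V$ meets no $L_j$. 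By Definition \ref{def:EarthquakeMulti} the bracketed product is then empty and $\mathcal{E}'_{L,\gamma}(\rho)(A) = \rho(A)$.

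Finally I would feed this into Lemma \ref{lem:x0prime}, which gives $\mathcal{E}'_{L,\gamma}(\rho)(A) = P\, \mathcal{E}_{L,\gamma}(\rho)(A)\, P^{-1}$. Substituting $\mathcal{E}'_{L,\gamma}(\rho)(A) = \rho(A)$ and solving yields $\mathcal{E}_{L,\gamma}(\rho)(A) = P^{-1} \rho(A) P$, which is exactly the asserted identity once $A$ ranges over all of $T_a$.

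The step I expect to be the main obstacle is the claim that $A \in T_a$ preserves each side of $L_a$ rather than interchanging them, since this is precisely what makes the crossing sequence from $\tilde{x}'_0$ empty. It rests on the coorientation argument: the elements of $T_a$ arise from loops in the hypersurface $N_{(i)}$ via the injection $\pi_1(N_{(i)}) \hookrightarrow \pi_1(M)$, and because $N_{(i)}$ is two-sided its normal bundle is trivial, so transport around such a loop cannot reverse the transverse direction. Hence the corresponding deck transformations fix the two local sides of $L_a$. I would state this carefully, because without side-preservation the path from $\tilde{x}'_0$ to $\tilde{x}'_0 . A$ could be forced to recross $L_a$ and the whole reduction would break down.
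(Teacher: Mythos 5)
Your proposal is correct and follows essentially the same route as the paper's proof: place an auxiliary base point $\tilde{x}'_0$ in the component of $\widetilde{M}-\widehat{L}$ adjacent to $L_a$ closest to $\tilde{x}_0$, observe that $\mathcal{E}'_{L,\gamma}(\rho)$ is trivial on $T_a$ since $\tilde{x}'_0.A$ stays in that component, and then conjugate via Lemma \ref{lem:x0prime}. The only difference is that you explicitly justify the side-preservation of $L_a$ by elements of $T_a$ using two-sidedness, a point the paper asserts without comment ("So $\tilde{x}'_0 A$ is still in $C$"), so your write-up is if anything slightly more complete.
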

\begin{proof}
We can choose $\tilde{x}'_0$ near $L_a$ so that the minimal path from $\tilde{x}_0$ to $\tilde{x}'_0$ cross $L_1,...,L_k$. That is, $\tilde{x}'_0$ is in the component $C$ of $\widetilde{M} - \bigcup_{i=1}^\infty L_i$ adjacent to $L_a$ and closest to $\tilde{x}_0$. So $\tilde{x}'_0 A$ is still in $C$ for all $A\in T_a$, which means $\mathcal{E}'_{L,\gamma}(\rho)(A) = \rho(A)$ for all $A\in T_a$ (because of path independence for definition \ref{def:EarthquakeMulti}). Applying lemma \ref{lem:x0prime} we get $ \mathcal{E}_{L,\gamma}(\rho)(A) =  \left(\prod_{i=k}^1 \gamma(L_i)^{s_i}\right)^{-1} \rho(A) \left(\prod_{i=k}^1 \gamma(L_i)^{s_i}\right)$
\end{proof}
The following theorem implies commutativity of deforming along disjoint (multi-) hypersurfaces, and it also shows that choosing $\gamma$ values in a 1-parameter subgroup of the centralizer induces a flow path of representations.
\begin{theorem}
\label{thm:commute}
Let $\gamma$ and $\alpha$ be maps $\mathcal{L}\rightarrow G$ satisfying (1) and (2), and so that $\gamma(L_i)\alpha(L_i) = \alpha(L_i)\gamma(L_i)$. Then
$$\mathcal{E}_{L,\gamma}(\mathcal{E}_{L,\alpha}(\rho)) = \mathcal{E}_{L,\alpha}(\mathcal{E}_{L,\gamma}(\rho)).$$
\end{theorem}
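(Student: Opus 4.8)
\noindent The plan is to reduce the identity to a single multiplicative statement about a ``product'' deformation. First I would introduce the map $\gamma\alpha:\mathcal{L}\to G$ defined componentwise by $(\gamma\alpha)(L_i)=\gamma(L_i)\alpha(L_i)$, and check that it again satisfies conditions (1) and (2) with respect to $\rho$: since $\gamma(L_i)$ and $\alpha(L_i)$ both centralize $\rho(T_i)$, so does their product, giving (1); and conjugation by $\rho(A)$ distributes over the product, giving (2). I would also record that the hypothesis $\gamma(L_i)\alpha(L_i)=\alpha(L_i)\gamma(L_i)$, imposed on the base components, propagates to all of $\mathcal{L}$ by property (2), since conjugation by $\rho(A)$ preserves commutation; hence $\gamma\alpha=\alpha\gamma$ as maps $\mathcal{L}\to G$. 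The theorem then follows once I establish the key identity
$$\mathcal{E}_{L,\gamma}(\mathcal{E}_{L,\alpha}(\rho))=\mathcal{E}_{L,\gamma\alpha}(\rho),$$
because swapping the roles of $\gamma$ and $\alpha$ gives $\mathcal{E}_{L,\alpha}(\mathcal{E}_{L,\gamma}(\rho))=\mathcal{E}_{L,\alpha\gamma}(\rho)$, and the equality $\gamma\alpha=\alpha\gamma$ identifies the two right-hand sides.

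\noindent The step I expect to be the main obstacle is that $\gamma$ is $\rho$-equivariant but in general \emph{not} $\rho_\alpha$-equivariant, where $\rho_\alpha:=\mathcal{E}_{L,\alpha}(\rho)$; so forming $\mathcal{E}_{L,\gamma}(\rho_\alpha)$ forces us to re-propagate $\gamma$ from its base data using property (2) relative to $\rho_\alpha$. I would pin down this $\rho_\alpha$-equivariant extension $\widehat{\gamma}$ by the local conjugation formula
$$\widehat{\gamma}(L_b)=P_b^{-1}\,\gamma(L_b)\,P_b,\qquad P_b:=\prod_{i=m}^{1}\alpha(L_i)^{s_i},$$
where $L_1,\dots,L_m$ (with signs $s_i$) are the hypersurfaces that the minimal path from $\tilde x_0$ crosses before reaching $L_b$. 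To justify that this is the correct extension I would verify (1) and (2) relative to $\rho_\alpha$: condition (1) is exactly Lemma \ref{lem:conjugateTi}, which yields $\rho_\alpha(T_b)=P_b^{-1}\rho(T_b)P_b$, so $\widehat{\gamma}(L_b)$ centralizes $\rho_\alpha(T_b)$ if and only if $\gamma(L_b)$ centralizes $\rho(T_b)$; condition (2) is a direct computation from the defining relation $\rho_\alpha(A)=\rho(A)\,c_\alpha(A)$ together with property (2) for $\alpha$. By uniqueness of the equivariant extension given the base values, this $\widehat{\gamma}$ is the map used in $\mathcal{E}_{L,\gamma}(\rho_\alpha)$.

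\noindent With the formula in hand the remainder is a telescoping computation. Writing $Q_j=\alpha(L_{a_j})^{s_j}\cdots\alpha(L_{a_1})^{s_1}$ for the partial $\alpha$-products along the path from $\tilde x_0$ to $\tilde x_0 A$ (so that $P_{a_j}=Q_{j-1}$ and $Q_k=c_\alpha(A)$), I would expand
$$\mathcal{E}_{L,\gamma}(\rho_\alpha)(A)=\rho(A)\,Q_k\prod_{j=k}^{1}\big(Q_{j-1}^{-1}\gamma(L_{a_j})^{s_j}Q_{j-1}\big)$$
and collapse the adjacent factors via $Q_jQ_{j-1}^{-1}=\alpha(L_{a_j})^{s_j}$, which leaves the alternating product
$$\rho(A)\,\alpha(L_{a_k})^{s_k}\gamma(L_{a_k})^{s_k}\cdots\alpha(L_{a_1})^{s_1}\gamma(L_{a_1})^{s_1}.$$
Here the hypothesis enters decisively: pointwise commutativity lets me replace each adjacent pair $\alpha(L_{a_j})^{s_j}\gamma(L_{a_j})^{s_j}$ by $(\gamma\alpha)(L_{a_j})^{s_j}$, giving exactly $\mathcal{E}_{L,\gamma\alpha}(\rho)(A)$ and hence the key identity. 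I expect the only delicate bookkeeping to be the indexing of the partial products $Q_j$ and the verification of property (2) for $\widehat{\gamma}$; the algebraic heart is the single telescoping collapse above, in which commutativity is invoked precisely once per crossed hypersurface.
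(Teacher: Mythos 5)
Your proposal is correct and takes essentially the same route as the paper: both hinge on Lemma \ref{lem:conjugateTi} to define the re-propagated equivariant map $\widehat{\gamma}$ (the paper's $\gamma_\mathcal{E}$) by conjugation with the partial $\alpha$-products, and both perform the identical telescoping collapse to the interleaved product $\rho(A)\,\alpha(L_{a_k})^{s_k}\gamma(L_{a_k})^{s_k}\cdots\alpha(L_{a_1})^{s_1}\gamma(L_{a_1})^{s_1}$, with commutativity invoked once per crossed hypersurface. Your extra packaging via the product map $\gamma\alpha$ and the identity $\mathcal{E}_{L,\gamma}(\mathcal{E}_{L,\alpha}(\rho))=\mathcal{E}_{L,\gamma\alpha}(\rho)$ is a mild refinement of the paper's final display (it additionally records that the composite is itself a single deformation along $L$), and your explicit check that $\widehat{\gamma}$ satisfies property (2) relative to $\rho_\alpha$ fills in a step the paper asserts as the ``natural'' definition without verification.
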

\begin{proof}
Note that $\gamma$ on the left hand side is different from the one on the right because the starting representations are different. When distinction is required we write $\gamma_\rho$ for the $\gamma$ on the right hand side and
$\gamma_{\mathcal{E}}$ for the left hand side, similarly for $\alpha$. The equation can be written more unambiguously as
$$
\mathcal{E}_{L,\gamma_\mathcal{E}}(\mathcal{E}_{L,\alpha_\rho}(\rho)) = \mathcal{E}_{L,\alpha_\mathcal{E}}(\mathcal{E}_{L,\gamma_\rho}(\rho)).
$$
Note that we have yet to define $\gamma_\mathcal{E}$ and $\alpha_\mathcal{E}$. Since, by lemma \ref{lem:conjugateTi}, $\mathcal{E}_{L,\alpha}(\rho)(T_i) = \xi_i^{-1} \rho(T_i) \xi_i$ for some transformation $\xi_i$, we can naturally define $\gamma_\mathcal{E}$ so that $\gamma_\mathcal{E}(L_i) = \xi_i^{-1}\gamma_\rho(L_i)\xi_i$. Similarly can relate $\alpha_\rho$ and $\alpha_\mathcal{E}$ this way.

Let $A\in\pi_1(M, x_0)$, let $L_1,..., L_k$ be the hypersurfaces between $\tilde{x}_0$ and $\tilde{x}_0 A$ and $T_i$ be the subgroup preserving $L_i$, for $i=1,..,k$. Since $L_1,...,L_{i-1}$ is between $\tilde{x}_0$ and $L_i$, by lemma \ref{lem:conjugateTi} we have
$$\mathcal{E}_{L,\alpha}(\rho)(T_i) = \alpha_\rho(L_1)^{-s_1}...\alpha_\rho(L_{i-1})^{-s_{i-1}}  \rho(T_i) \alpha_\rho(L_{i-1})^{s_{i-1}}  ... \alpha_\rho(L_1)^{s_1}
$$
So for $i=1,...k$
\begin{equation}
\label{eq:gammaConj}
\gamma_\mathcal{E}(L_i) =  \alpha_\rho(L_1)^{-s_1}...\alpha_\rho(L_{i-1})^{-s_{i-1}}  \gamma_\rho(L_i)  \alpha_\rho(L_{i-1})^{s_{i-1}} ... \alpha_\rho(L_1)^{s_1}.
\end{equation}
Moreoever, by definition
\begin{equation}
\label{eq:EcomposeE}
\mathcal{E}_{L,\gamma}(\mathcal{E}_{L,\alpha}(\rho))(A) = \rho(A) \left[ \alpha_\rho(L_k)^{s_k} ... \alpha_\rho(L_1)^{s_1} \gamma_\mathcal{E}(L_k)^{s_k} ...\gamma_\mathcal{E}(L_1)^{s_1} \right].
\end{equation}
From (\ref{eq:gammaConj}) and (\ref{eq:EcomposeE}) we get 
$$
\mathcal{E}_{L,\gamma}(\mathcal{E}_{L,\alpha}(\rho))(A) = \rho(A) \left[ \alpha_\rho(L_k)^{s_k}\gamma_\rho(L_k)^{s_k}...\alpha_\rho(L_1)^{s_1}\gamma_\rho(L_1)^{s_1}\right].
$$ 
An analogous formula can be shown for
$\mathcal{E}_{L,\alpha}(\mathcal{E}_{L,\gamma}(\rho))(A) $, and together with the commutativity assumption $\alpha_\rho(L_i)\gamma_\rho(L_i) = \gamma_\rho(L_i)\alpha_\rho(L_i)$, the theorem follows.
\end{proof}

\begin{corollary}
Deformations along disjoint hypersurfaces commute.
\end{corollary}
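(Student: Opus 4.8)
The plan is to deduce commutativity along disjoint hypersurfaces directly from Theorem \ref{thm:commute} by viewing the two disjoint hypersurfaces as the components of a single multi-hypersurface and extending each deformation map trivially to the other family.

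Concretely, suppose $N$ and $L$ are disjoint hypersurfaces (in distinct homotopy classes), with a deformation map $\gamma$ defined on the lifts of $N$ and a deformation map $\alpha$ defined on the lifts of $L$. I would form the multi-hypersurface $K = N\cup L$, whose set of lifts $\mathcal{K}$ is the disjoint union of the lifts of $N$ and the lifts of $L$, and then extend $\gamma,\alpha$ to maps $\tilde\gamma,\tilde\alpha:\mathcal{K}\rightarrow G$ as follows: $\tilde\gamma$ agrees with $\gamma$ on lifts of $N$ and equals the identity on lifts of $L$, while $\tilde\alpha$ equals the identity on lifts of $N$ and agrees with $\alpha$ on lifts of $L$. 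The first routine check is that $\tilde\gamma$ and $\tilde\alpha$ still satisfy conditions (1) and (2); this is immediate, since the constant value $1$ lies in every centralizer and trivially satisfies $\rho(A^{-1})\cdot 1\cdot\rho(A)=1$. The second observation, also immediate from Definition \ref{def:EarthquakeMulti}, is that deforming along $K$ with $\tilde\gamma$ reproduces exactly the deformation along $N$ with $\gamma$: in the ordered product defining $\mathcal{E}_{K,\tilde\gamma}(\rho)(A)$, every factor coming from a crossed lift of $L$ equals $1^{\pm1}=1$ and drops out, leaving precisely the product for $\mathcal{E}_{N}(\rho)(A)$. Symmetrically, $\mathcal{E}_{K,\tilde\alpha}(\rho)=\mathcal{E}_{L}(\rho)$.

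The key point is then verifying the hypothesis of Theorem \ref{thm:commute}, namely that $\tilde\gamma(K_i)\tilde\alpha(K_i)=\tilde\alpha(K_i)\tilde\gamma(K_i)$ on every component $K_i\in\mathcal{K}$. On a lift of $N$ we have $\tilde\alpha(K_i)=1$, and on a lift of $L$ we have $\tilde\gamma(K_i)=1$; in either case one of the two factors is the identity, so the commutation relation holds automatically. This is where disjointness does the real work: it means the supports of $\tilde\gamma$ and $\tilde\alpha$ are disjoint, which forces the pointwise commutativity condition for free, with no algebraic hypothesis on $G$ needed. Applying Theorem \ref{thm:commute} to the multi-hypersurface $K$ with the maps $\tilde\gamma,\tilde\alpha$ yields
$$
\mathcal{E}_{K,\tilde\gamma}(\mathcal{E}_{K,\tilde\alpha}(\rho)) = \mathcal{E}_{K,\tilde\alpha}(\mathcal{E}_{K,\tilde\gamma}(\rho)),
$$
which, translating back through the second observation, is exactly
$$
\mathcal{E}_{N,\gamma}(\mathcal{E}_{L,\alpha}(\rho)) = \mathcal{E}_{L,\alpha}(\mathcal{E}_{N,\gamma}(\rho)).
$$

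I do not expect a serious obstacle here, since the entire weight of the argument is carried by Theorem \ref{thm:commute}; the corollary is essentially a packaging statement. The only point deserving care is bookkeeping in the composition step: in the proof of Theorem \ref{thm:commute} the map $\gamma_\mathcal{E}$ is obtained from $\gamma_\rho$ by the conjugations of Lemma \ref{lem:conjugateTi}, and I should confirm that this reindexing is compatible with the trivial extension. But since the trivially-extended maps are themselves legitimate deformation maps satisfying (1) and (2), Lemma \ref{lem:conjugateTi} applies to them verbatim and guarantees that each composed deformation again changes things only by the expected conjugation, so nothing beyond the above needs to be checked.
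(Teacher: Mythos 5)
Your proposal is correct and matches the intended derivation: the paper states this corollary without a separate proof precisely because Theorem \ref{thm:commute} is formulated for two maps on a single multi-hypersurface, and the trivial extension by the identity (which satisfies (1), (2), and the pointwise commutation hypothesis automatically) is exactly how the corollary is meant to follow. Your additional care about the conjugated maps $\gamma_\mathcal{E}$, $\alpha_\mathcal{E}$ from Lemma \ref{lem:conjugateTi} being compatible with the trivial extension is the right bookkeeping point and is handled correctly.
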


\subsection{Infinitesimal deformation}

Let $\mathfrak{g}$ be the lie algebra of $G$, let $\pi = \pi_1(M)$.
\begin{definition}
Suppose $s:(-\epsilon,\epsilon)\rightarrow G$ is an analytic path such that $s(0)=1$, that is, $s(t) = \exp(a_1 t + a_2 t^2 +...)$ for $a_i\in\mathfrak{g}$; and suppose that $a_1\ne 0$. Then we let $\frac{d}{dt}|_{t=0} s(t) = a_1$ and it will be called \emph{the derivative of $s(t)$}. We may also write $\dot{s}(0)=a_1$.
\end{definition}
With the notation as in previous section, suppose we have a 1-parameter subgroup $ e^{t X}$ (where $X\in\mathfrak{g}$) contained in the centralizer of $\rho(N_0)$. Thus we have a family of choices for $\gamma(N_0)$, we write $\gamma^t(N_0) = e^{tX}$. Applying the above construction we get a differentiable 1-parameter family of deformations of the representation $\rho\in\Hom(\pi,G)$. 

Composition with the adjoint representation gives us $\pi\stackrel{\rho}{\rightarrow} G\stackrel{Ad}{\rightarrow}\Aut(\mathfrak{g})$. An infinitesimal deformation of $\Ad\rho$ is given by a 1-cocycle $u:\pi\rightarrow\mathfrak{g}$. See \cite{GoldmanSymplecticNature}.

Let $A\in\pi$. For simplicity of notation suppose we have a family of deformations as in previous section given by $\mathcal{E}^t(\rho)(A) =\rho(A)\gamma^t_k...\gamma^t_1$ where each $\gamma^t_i = e^{tX_i}$ is a 1-parameter group. Of course the collection $X_1,..., X_k\in\mathfrak{g}$ depends on $A$. Let $\mathcal{E}^t(\rho)(A) = \rho(A)\exp( tu(A) + O(t^2))$ for $u(A)\in\mathfrak{g}$, in other words we have 
\begin{definition} For a smooth deformation $\mathcal{E}^t(\rho)$ of $\rho$ such that $\mathcal{E}^0(\rho) = \rho$, the map $u:\pi_1(M)\rightarrow \mathfrak{g}$ defined by
  $$u(A):=\frac{d}{dt}|_{t=0}\left[\rho(A)^{-1}\mathcal{E}^t(\rho)(A) \right]$$
is called the \emph{infinitesimal deformation corresponding to $\mathcal{E}^t(\rho)$}.
\end{definition}
Indeed $u$ is a 1-cocycle corresponding to the 1-parameter family of representations $\mathcal{E}^t(\rho)$. We have 
$$
\begin{array}{rcl}
\exp(tu(A) + O(t^2))&  = &  \gamma^t_k...\gamma^t_1 \\
&  = & e^{tX_k}...e^{tX_1} \\
 & = & e^{t(X_k +... + X_1) + O(t^2)}
\end{array}
$$
where by the Baker--Campbell--Hausdorff formula, the coefficients for higher order $t$-terms on the right hand side include various combinations nested of lie bracket $[,]$ between $X_1,...,X_k$. Therefore considering the linear coefficients of $t$ we get
\begin{remark}
\label{rem:u(A)}
$u(A) = X_1 +  ... + X_k$.
\end{remark}

\section{Algebraic earthquake along measured laminations}
We now switch our attention to the case where $S$ is a closed hyperbolic surface, and Lie group $G=SO(n,1) =\Isom(\mathbb{H}^n)$, with Lie algebra $\mathfrak{g} = \mathfrak{so}(n,1)$. For the rest of this section, let $\rho:\pi_1(S)\rightarrow SO(n,1)$ be an Anosov representation. In this section we will define twist deformations $\rho$ along weighted simple closed curves, and then earthquakes along measured laminations on $S$. We have
\begin{theorem}
\label{thm:GW}
(Guichard-Wienhard \cite{GuichardWienhardAnosov}) Let $\pi$ be a finitely generated word hyperbolic group and $G$ a real semisimple Lie group of real rank 1. For a representation $\rho:\pi\rightarrow G$, the following are equivalent:
\begin{enumerate}
\item[(i)] $\rho$ is Anosov.
\item[(ii)] There exists a continuous $\rho$-equivariant and injective map $L:\partial_\infty\pi \rightarrow G/P$
\item[(iii)] ker $\rho$ is finite and $\rho(\pi)$ is convex cocompact.
\end{enumerate}
\end{theorem}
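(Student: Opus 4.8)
The plan is to establish the three equivalences pairwise, exploiting the simplifications afforded by the real-rank-one hypothesis; the heavy lifting is already done in the cited works \cite{GuichardWienhardAnosov, BowditchGeomFinite}, so I would organize the argument around the geometric dictionary available in this setting. In rank one the parabolic $P$ is, up to conjugacy, the stabilizer of a point at infinity, so $G/P$ is the visual boundary $\partial_\infty X$ of the rank-one symmetric space $X = G/K$; for $G = SO(n,1)$ this is $\partial_\infty\mathbb{H}^n = S^{n-1}$. Crucially, the two opposite parabolics lie in a single conjugacy class, so one boundary map carries both attracting and repelling data, and transversality of two points of $G/P$ reduces simply to their being distinct (any two distinct boundary points span a geodesic of $X$).

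First I would treat $(i) \Rightarrow (ii)$. By the definition of an Anosov representation there are continuous $\rho$-equivariant maps $\xi^+, \xi^- : \partial_\infty\pi \to G/P$ coming from the contracting and dilating sections of the associated flat bundle over the geodesic flow space of $\pi$. Since in rank one these maps land in the same space $G/P = \partial_\infty X$ and in fact coincide, the transversality condition becomes $\xi^+(\eta_1) \neq \xi^+(\eta_2)$ for $\eta_1 \neq \eta_2$, which is exactly injectivity; setting $L = \xi^+$ yields $(ii)$.

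Next, for $(ii) \Rightarrow (iii)$, set $\Lambda = L(\partial_\infty\pi) \subset G/P = \partial_\infty X$, a compact $\rho(\pi)$-invariant set, and take its convex hull $\mathrm{Hull}(\Lambda) \subset X$. I would show the $\rho(\pi)$-action on $\mathrm{Hull}(\Lambda)$ is cocompact by transporting the cocompact action of $\pi$ on the space of distinct pairs of $\partial_\infty\pi$ (equivalently the geodesic flow space) across the embedding induced by $L$. For the kernel: if $g \in \ker\rho$ then equivariance gives $L(g\cdot\eta) = \rho(g)L(\eta) = L(\eta)$ for all $\eta$, so injectivity of $L$ forces $g$ to fix $\partial_\infty\pi$ pointwise; since a non-elementary word-hyperbolic group has only a finite subgroup fixing its entire boundary, $\ker\rho$ is finite, giving $(iii)$.

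The implication $(iii) \Rightarrow (i)$ is where I expect the main obstacle, and it is the genuine content of the theorem. Here I would invoke the geometric characterization of convex cocompactness in rank one: orbit maps $\pi \to X$ are quasi-isometric embeddings and the limit set is the continuous image of $\partial_\infty\pi$, furnishing the candidate boundary maps. The real work is upgrading this coarse control to the \emph{uniform} exponential contraction along the geodesic flow that the Anosov definition demands. A Morse-type lemma identifies orbit quasi-geodesics with honest geodesics up to bounded error, and one then estimates the single relevant singular-value exponent of $\rho$ along these geodesics, showing the gap grows linearly in the flow parameter with constants that are uniform precisely because the convex core is compact. Closing this estimate uniformly over the cocompact quotient is the delicate step that returns us to $(i)$.
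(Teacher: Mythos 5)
There is nothing in the paper to compare your proposal against: Theorem \ref{thm:GW} is quoted verbatim from Guichard--Wienhard \cite{GuichardWienhardAnosov} and used as a black box; the paper never proves it. Judged against the literature, your outline does follow the standard route in rank one (the dictionary $G/P=\partial_\infty X$, transversality $=$ distinctness, convex hull of the limit set for $(ii)\Rightarrow(iii)$, and Morse-lemma plus uniform contraction estimates for $(iii)\Rightarrow(i)$), but two of your steps have genuine gaps. First, in $(i)\Rightarrow(ii)$ the assertion that $\xi^+$ and $\xi^-$ ``in fact coincide'' is not part of the definition; it follows from the dynamics-preserving property (both maps send the attracting fixed point $\gamma^+$ of an infinite-order $\gamma\in\pi$ to the attracting fixed point of the loxodromic $\rho(\gamma)$) together with density of such fixed points in $\partial_\infty\pi$ --- and that density argument already requires $\pi$ to be non-elementary.

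Second, and more seriously, in $(ii)\Rightarrow(iii)$ you conflate ``the space of distinct pairs of $\partial_\infty\pi$'' with ``the geodesic flow space.'' These are not equivalent: $\pi$ acts cocompactly on the flow space $\partial_\infty^{(2)}\pi\times\mathbb{R}$, but \emph{not} on $\partial_\infty^{(2)}\pi$ (orbits degenerate toward the diagonal), and transporting cocompactness to $\mathrm{Hull}(\Lambda)$ requires parametrizing the geodesics $[L(\eta),L(\xi)]$ coherently and comparing with the flow parameter --- this is precisely the technical heart of that implication, not a formality. Moreover, convex cocompactness presupposes that $\rho(\pi)$ is discrete and acts properly discontinuously on the hull; your sketch never establishes discreteness. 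It does follow from $(ii)$, via the convergence-group property of the $\pi$-action on $\partial_\infty\pi$ and injectivity of $L$, but it must be proved, and it too fails without non-elementarity. Indeed, as stated the implication $(ii)\Rightarrow(iii)$ is \emph{false} for elementary $\pi$: take $\rho:\mathbb{Z}\rightarrow SO(3,1)$ sending the generator to an irrational rotation about a geodesic of $\mathbb{H}^3$; the two-point boundary $\partial_\infty\mathbb{Z}$ maps injectively, continuously and equivariantly to the endpoints of the axis, and $\ker\rho$ is trivial, yet $\rho(\mathbb{Z})$ is not even discrete. So your argument (and, strictly speaking, the theorem as reduced in the paper) needs the standing hypothesis that $\pi$ is non-elementary; this costs nothing for the paper's application to surface groups, but your write-up should state it and use it explicitly at the three places above.
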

Considering that $G=SO(n,1)$ is of rank 1, and $\pi_1(S)$ is word hyperbolic, we will use condition $(ii)$ above as the definition of Anosov representations in our case. Thus there is a $\rho$-equivariant homeomorphism $L: \mathbb{S}^1_\infty\stackrel{}{\rightarrow} \Lambda$ where $\mathbb{S}^1_\infty$ is the boundary at infinity of $\widetilde{S}$ and $\Lambda$ is the limit set of $\rho(\pi_1(S))$. For the rest of this article $L$ is reserved to denote this limit set map.

\subsection{Measured laminations}
\label{subsec:MLS}

Let $A\in\pi_1(S)$, $\lambda\in\mathcal{ML}(S)$, $\tilde{x_0}\in\widetilde{S}$. Following \cite{AramayonaLeininger} and \cite{BonahonCurrents}, we can view $\lambda$ as an $\pi_1(S)$-invariant measure on $G(\tilde{S}) =( {\mathbb{S}^1_\infty}\times{\mathbb{S}^1_\infty} - \Delta) / \mathbb{Z}_2$ where $\mathbb{Z}_2$ acts by swapping the two coordinates. Indeed $G(\tilde{S})$ is the space of unoriented geodesic on $\tilde{S}$. 

Also considering the geodesic arc $\mathcal{A}$ from $\tilde{x_0}$ to $\tilde{x_0}A$, and we can define $\lambda|_A$ to be the measure on the space of geodesics intersecting $\mathcal{A}$ transversely. We have the support of $\lambda|_A$ is contained inside the interior of a compact set $I_1 \times I_2$ where $I_i$'s are disjoint closed intervals of $\mathbb{S}^1_\infty$, indeed $I_1, I_2$ are separated by (neighborhoods of) the end points of the extended geodesic containing $\mathcal{A}$. If a sequence $(\lambda_i)$ converges to $\lambda$ in $\mathcal{ML}(S)$ then $\lambda_i|_A$ converges to $\lambda|_A$ in the weak topology on the space of measures on $I_1\times I_2$.

Given $\rho:\pi_1(S)\rightarrow SO(n,1)$ an Anosov representation, recall that the homeomorphism $L: \mathbb{S}^1_{\infty} \rightarrow \Lambda_\rho$ gives us $G(\tilde{S}) \stackrel{\sim}{ \rightarrow} ({\Lambda_\rho}\times {\Lambda_\rho} -\Delta)/\mathbb{Z}_2$ and in particular $I_1\times I_2 \stackrel{\sim}{\rightarrow}L(I_1)\times L(I_2)$. Thus any measure on $I_1\times I_2$ pushes forward to a measure on $L(I_1)\times L(I_2)$. In particular $\lambda|_A$ on $I_1\times I_2$ induces $L_*\lambda|_A$ a measure on $L(I_1)\times L(I_2)$. 

\subsection{Deforming Anosov representations along weighted simple closed curves}
\label{subsec:twist}
Suppose we have a weighted simple closed geodesic $l\subset S$, and $\rho:\pi_1(S)\rightarrow SO(n,1)$ an Anosov representation. We will now define a (right) twist of $\rho$ along $l$. Following the construction in section \ref{sec:TheConstruction}, let $\tilde{x}_0$ be a base point in $\widetilde{S}$ not lying on a lift of $l$. We choose an orientation for $S$ and an orientation for $l$. The way we construct $\gamma$, it will turn out that the orientation of $l$ does not matter.

\begin{notation}
For $x,y\in\partial_\infty\mathbb{H}^n$, $x\ne y$, $t\in\mathbb{R}$, let $H(x,y,t)$ be the hyperbolic transformation (loxodromic without rotation) fixing $x,y$ with translation length $t$ in the direction from $x$ to $y$. So $x$ is the repelling and $y$ is the attracting fixed point.
\end{notation}

We have $H(x,y,t)$ preserve the geodesic in $\mathbb{H}^n$ whose end-points are $x,y$, and it translate along this geodesic a distance $t$ from $x$ to $y$. 

 Let $l_0$ be a lift of $l$ in the universal cover $\widetilde{S}$, and suppose $T_0\subset \pi_1(S)$ is the subgroup preserving $l_0$ under deck transform action. Then $\rho(T_0)$ is an infinite cyclic group, $\rho(T_0) = \langle \tau_0 \rangle$. Moreover by properties of Anosov representations in $SO(n,1)$, we have $\tau_0$ is loxodromic, thus $\tau_0$ can be written uniquely as a composition $\tau_0 = \sigma_0\theta_0$ such that $\sigma_0 = H(p_0,q_0,t_0)$ is a  hyperbolic transformation in the direction of $l_0$, and $\theta_0$ is an elliptic transformation with $p_0, q_0$ among its fixed points. It's important to note that we choose the generator $\tau_0$ such that $L^{-1}(p_0), L^{-1}(q_0)\in\mathbb{S}^1_\infty$  is respectively the starting and ending point of the directed infinite geodesic $l_0$.
 
 We have $\sigma_0$ and $\theta_0$ commutes with $H(p_0,q_0,t)$ for any $t\in\mathbb{R}$. (This is because up to conjugation $p_0 = 0$ and $q_0 = \infty$ in $\mathbb{R}^{n-1}\cup\{\infty\} = \partial_\infty\mathbb{H}^n$, so $H(p_0,q_0,t)$ acts as scalar multiplication, $\theta_0$ acts as an $SO(n-1)$ rotation on $\mathbb{R}^{n-1}$.) So we can choose 
 $$\gamma^t(l_0) = H(p_0,q_0, tw)$$ where $w$ is the weight of $l$ and $t\in\mathbb{R}^+$, and indeed $\gamma^t (l_0)$ commutes with $\tau_0$ as required by condition (1) in section \ref{sec:TheConstruction}, and thus this choice of $\gamma^t(l_0)$ gives us $\gamma^t(l_i) = H(p_i,q_i,tw)$ for any lift $l_i$. Note that we also have $L^{-1}(p_i), L^{-1}(q_i)$ are respectively the starting and ending points of $l_i$. Following Definition \ref{def:Earthquake} we have constructed a 1-parameter family of algebraic twist deformations of $\rho$.
\begin{definition}
\label{notation:defo}
Let $\mathcal{E}^t_l(\rho)$ be the 1-parameter family of representations obtained by deforming of $\rho$ along the weighted simple closed curve $l$ by choosing $\gamma^t(l_0) = H(p_0,q_0, tw)$ as above. We simply write $\mathcal{E}_l(\rho)$ for when $t=1$.
\end{definition}

\begin{figure} [h] 
 \begin{center}
    \includegraphics[width=5.5cm]{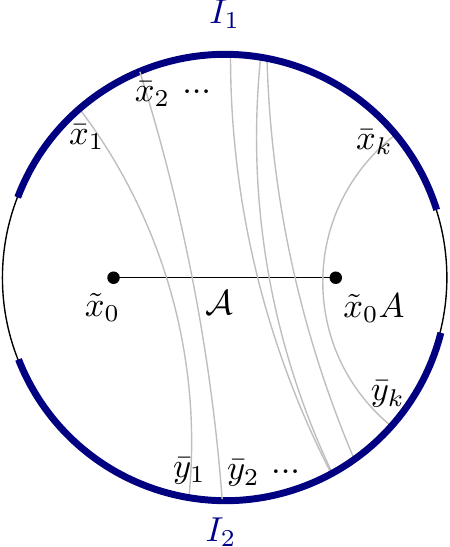}
    \label{fig:I1I2}
    \caption{$I_1\times I_2\subset {\mathbb{S}^1_\infty}^{(2)}$. Here $\bar{x}_i, \bar{y}_i$ are $L^{-1}(x_i), L^{-1}(y_i)$ respectively.}
  \end{center}
\end{figure}

Considering the boundary at infinity $\mathbb{S}^1_\infty = \partial_\infty\widetilde{S}$ which is mapped homeomorphically to the limit set $\Lambda_\rho$, the picture of these deformations is very similar to the classical Fenchel-Nielsen twist deformation of a hyperbolic surface. Let $A\in\pi_1(S)$ and let $\mathcal{A}$ be the geodesic arc from $\tilde{x}_0$ to $\tilde{x}_0 A$. By definition we have 
$$\rho(A)^{-1}\mathcal{E}^t_l(\rho)(A) = \gamma^t(l_k)^{s_k} ...\gamma^t(l_1)^{s_1} = \prod_{i=k}^1 H(x_i,y_i,tw)$$
if we let $\gamma^t(l_i)^{s_i} = H(x_i,y_i,tw)$. Then indeed $L^{-1}(x_i), L^{-1}(y_i)$ are the end points of the lift $l_i$. If we changed the orientation of $l$, the signs $s_1,...,s_k$ would be flipped, but we would also invert the choice of $\gamma^t(l_0)$ and hence of each $\gamma^t(l_i)$, because $H(y_0,x_0,t) = H(x_0,y_0,t)^{-1}$, thus we would keep the deformation the same.

We will now show that all the repelling fixed points $x_i$'s are on one side of $\mathcal{A}$ and the attracting fixed points $y_i$'s are on the otherside. Let $x_*, y_*\in\mathbb{S}^1_\infty$ be the endpoints of the bi-infinite geodesic that is $\mathcal{A}$ extended. Then $L^{-1}(x_i), L^{-1}(y_i)$ are in $\mathbb{S}^1_\infty - \{L^{-1}(x_*), L^{-1}(y_*) \}$ which is the union of two disjoint open intervals $I^o_1, I^o_2$. We name these intervals such that a geodesic from $I^o_1$ to $I^o_2$ intersects $\mathcal{A}$ at a positive intersection point. (Again refer to definition \ref{def:Earthquake}.) This is because if $l_i$ is from $I^o_1$ to $I^o_2$ then $s_i = 1$, so $\gamma^t(l_i) = H(x_i,y_i,tw)$ and so $L^{-1}(x_i)$ is the starting point of $l_i$ which is in $I^o_1$. If $l_j$ is from $I^o_2$ to $I^o_1$ then $s_j = -1$, so $\gamma^t(l_j)^{-1} = H(x_j,y_j,tw)$  so $\gamma^t(l_j) = H(y_j,x_j,tw)$ which makes $L^{-1}(y_j)$ the starting point of $l_j$ which is in $I^o_2$. 
\begin{remark} $x_i\in L(I^o_1)$ and $y_i\in L(I^o_2)$ for $i=1, ... , k$.
\end{remark}
\subsection{Proof of convergence}

For the rest of this section let $\lambda\in\mathcal{ML}(S)$ and $A\in\pi_1(S)$. Our goal is to prove that if a sequence of weighted simple closed curves $(l_i)$ converges to $\lambda$, then $(\mathcal{E}_{l_i}(\rho)(A))$ converges. The limit can then be defined to be $\mathcal{E}_{\lambda}(\rho)(A)$. Our approach for the proof of convergence loosely follow Kerckhoff's \cite{KerckhoffNielsen}.

\begin{notation}
For $x$ a point in a metric space $X$, we denote $B_r(x)$ the open ball of radius $r$ centered at $x$.
\end{notation}
\begin{notation}For a subset $X$ of a topological space, we denote $cl(X)$ the closure of $X$.
\end{notation}
\begin{notation}
Let $X$ be any set, we denote $$X^{(2)}:= ((X\times X) - \{(x,x)| x\in X\})/\mathbb{Z}_2$$
the set of unordered distinct pairs of points in $X$.
\end{notation}
\begin{notation}
Let $v(x,y) \in\mathfrak{g}$ be such that $e^{t.v(x,y)} = H(x,y,t)$ for all $t\in\mathbb{R}$.
\end{notation}
 Note that for $w\in\mathbb{R}^+$ we have $$\frac{d}{dt}|_{t=0}H(x,y,tw) = w.v(x,y).$$

\begin{remark}
Let $\mathbb{R}^{n-1}$ be the stereographic projection coordinate of $\partial_\infty\mathbb{H}^n - \{\infty \}$, and let $x,y\in\mathbb{R}^{n-1}$ . Then $H(x,y,t)$ varies analytically in $SO(n,1)$ as $x,y,t$ vary. That is, the map $H:(\partial_\infty\mathbb{H}^n)^{(2)}\times\mathbb{R} \rightarrow G$ is analytic, where $\partial_\infty\mathbb{H}^n$ is represented by either $\mathbb{R}^{n-1}\cup\{\infty\} $ or $S^{n-1}$. We also have $v:(\partial_\infty\mathbb{H}^n)^{(2)}\rightarrow\mathfrak{g}$ is analytic.
\end{remark}
From this point we work with a chosen positive definite inner product on $\mathfrak{g}$, the corresponding left invariant metric $d_G$ on $G$, and the standard metric $d_S$ on  $S^{n-1} = \partial_\infty\mathbb{H}^n$ which induces the product metric on ${S^{n-1}}^{(2)}$. 
\begin{lemma}
\label{lem:Lem1}
Let $T>0$ be a fixed constant. Let $(x_0,y_0)\in {S^{n-1}}^{(2)}$, there is $\bar{\varepsilon}>0$ and $K>0$ depending continuously on $(x_0,y_0)$ such that for any $0<\varepsilon\le\bar{\varepsilon}$ and $x,x'\in B_{\varepsilon}(x_0)$ and $y,y'\in B_{\varepsilon}(y_0)$ we have
$$
\| tv(x,y) - tv(x',y') \| <Kt\varepsilon,
$$
and also 
$$
d_G(H(x,y,t), H(x',y',t)) < Kt\varepsilon
$$
for $0<t\le T$, where $K$ depends continuously on $(x_0, y_0)$.
\end{lemma}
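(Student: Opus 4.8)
The plan is to reduce both estimates to the smoothness (indeed analyticity) of $v$ and $H$ recorded in the Remark preceding the statement, combined with compactness of $S^{n-1}$ and the crucial fact that $t$ ranges only over the bounded interval $(0,T]$. First I would fix $\bar\varepsilon$ smaller than $\tfrac12 d_S(x_0,y_0)$, so that $cl(B_{\bar\varepsilon}(x_0))$ and $cl(B_{\bar\varepsilon}(y_0))$ are disjoint; then every pair with first entry in $B_{\bar\varepsilon}(x_0)$ and second entry in $B_{\bar\varepsilon}(y_0)$ is a genuine element of ${S^{n-1}}^{(2)}$ lying in the fixed compact set $\Omega:=cl(B_{\bar\varepsilon}(x_0))\times cl(B_{\bar\varepsilon}(y_0))$, on which both $v$ and each $H(\cdot,\cdot,t)$ are defined and analytic.

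For the first inequality, since $v$ is $C^1$ its differential is bounded on the compact set $\Omega$, so $v$ is Lipschitz there with some constant $K_0$. For $x,x'\in B_\varepsilon(x_0)$ and $y,y'\in B_\varepsilon(y_0)$ the triangle inequality gives $d_S(x,x')<2\varepsilon$ and $d_S(y,y')<2\varepsilon$, so the product distance between $(x,y)$ and $(x',y')$ is at most $C\varepsilon$ for a constant $C$ coming from the product metric. Hence $\|v(x,y)-v(x',y')\|\le K_0 C\varepsilon$, and multiplying by $t$ yields the first bound at once.

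For the second inequality I would write $X=v(x,y)$ and $X'=v(x',y')$, so $H(x,y,t)=e^{tX}$ and $H(x',y',t)=e^{tX'}$, and interpolate through the one-parameter subgroups generated by $X_s:=(1-s)X+sX'$, $s\in[0,1]$. Using the Duhamel formula for the differential of the exponential together with left-invariance of $d_G$, the left-invariant speed of the path $s\mapsto e^{tX_s}$ is
$$\left\| t\int_0^1 \Ad\!\big(e^{(r-1)tX_s}\big)(X'-X)\,dr\right\|\le t\,C_{\Ad}\,\|X'-X\|,$$
where $C_{\Ad}:=\sup\{\,\|\Ad(e^{uX_s})\| : |u|\le T,\ s\in[0,1],\ (x,y),(x',y')\in\Omega\,\}$ is finite because $(u,X)\mapsto\|\Ad(e^{uX})\|$ is continuous and the indexing set is compact (note $(r-1)t\in[-T,0]$ for $r\in[0,1]$, $t\le T$). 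Integrating over $s\in[0,1]$, bounding distance by path length, and inserting the Lipschitz estimate gives
$$d_G(H(x,y,t),H(x',y',t))\le t\,C_{\Ad}\,\|X'-X\|\le (C_{\Ad}K_0C)\,t\varepsilon,$$
so one may take $K=C_{\Ad}K_0C$, which serves simultaneously for both inequalities.

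The main obstacle — and the one place where the hypothesis $t\le T$ is essential — is the finiteness of $C_{\Ad}$: because $SO(n,1)$ contains loxodromic elements, $\|\Ad(e^{uX_s})\|$ grows exponentially in $u$, so without the a priori cap $|u|\le T$ no uniform bound exists and the estimate would degrade from $t\varepsilon$ to something like $e^{cT}\varepsilon$; capping $t$ at $T$ confines the adjoint factors to a compact set and preserves the clean linear-in-$t$ scaling. The remaining routine point is continuity in $(x_0,y_0)$: the choice $\bar\varepsilon=\tfrac12 d_S(x_0,y_0)$ is continuous, and $K_0$ and $C_{\Ad}$ are defined as suprema of fixed continuous functions over sets of radius depending continuously on the center, hence each depends continuously on $(x_0,y_0)$, and so does their product $K$. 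I expect only this last continuity bookkeeping and the uniform choice of $\Omega$ to require care, everything else being a direct consequence of analyticity and compactness.
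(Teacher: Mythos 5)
Your proof is correct, and its first half is essentially the paper's: you both take $\bar\varepsilon$ a fixed fraction of $d_S(x_0,y_0)$ so that the closed balls are disjoint, and then use a Lipschitz constant for $v$ on the compact product $\Omega = cl(B_{\bar\varepsilon}(x_0))\times cl(B_{\bar\varepsilon}(y_0))$ (the paper gets this constant from ``analytic $\Rightarrow$ locally Lipschitz $\Rightarrow$ Lipschitz on compact sets'' rather than from a bound on the differential, but the estimate is the same). Where you genuinely diverge is the second inequality. The paper deduces it in one stroke from Lipschitzness of the exponential map on a compact set: it introduces $C_{x_0,y_0,T}=\{H(x,y,t) \mid (x,y)\in\Omega,\ t\in[-T,T]\}$, takes a Lipschitz constant $K''$ for the (inverse) exponential map there, and writes $d_G(H(x,y,t),H(x',y',t))<K''\,\|tv(x,y)-tv(x',y')\|$. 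You instead prove the needed Lipschitz estimate by hand: interpolate linearly in $\mathfrak{g}$, bound the left-invariant speed of $s\mapsto e^{tX_s}$ via the Duhamel formula, and control the $\Ad$ factors uniformly over a compact parameter set. Your route is longer but buys two things. It supplies the bound in the direction actually needed --- the paper's wording, read literally, makes the \emph{inverse} exponential Lipschitz, which bounds $\|tv-tv'\|$ by $d_G(H,H')$, i.e.\ the reverse inequality, and one must silently upgrade to bi-Lipschitzness of $\exp$ on compacta to conclude; your Duhamel computation proves the correct direction outright. It also isolates exactly why the cap $t\le T$ is essential, namely the exponential growth of $\|\Ad(e^{uX})\|$ in loxodromic directions, which in the paper is hidden inside the compactness of $C_{x_0,y_0,T}$.

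Two small points to repair. First, your closing remark sets $\bar\varepsilon=\tfrac12 d_S(x_0,y_0)$, but at that exact value the two closed balls can touch, so $\Omega$ would meet the diagonal where $v$ is undefined and blows up; keep your original ``strictly smaller'' choice, e.g.\ $\bar\varepsilon=\tfrac13 d_S(x_0,y_0)$ as in the paper, which is still continuous in $(x_0,y_0)$. Second, your final estimates end with $\le$ where the statement asks for strict inequality; enlarging $K$ by a factor of $2$ (or using that the balls are open) settles this.
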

\begin{proof}
We let $\bar{\varepsilon} = (1/3)d_S(x_0, y_0)$ which ensures that  $cl(B_{\bar{\varepsilon}}(x_0)) \cap cl( B_{\bar{\varepsilon}}(y_0) )= \emptyset$. Since  $v:(\partial_\infty\mathbb{H}^n)^{(2)}\rightarrow\mathfrak{g}$ is analytic, it is locally lipschitz, and thus lipschitz on compact subsets of $(\partial_\infty\mathbb{H}^n)^{(2)}$, in particular on  $cl({B_{\bar{\varepsilon}}(x_0)})\times  cl( B_{\bar{\varepsilon}}(y_0) )$. Let $K'$ be the infimum of all lipschitz constant for $v$ on this compact set. Indeed 
$$ K' = \sup \{ \frac{\| v(x,y) - v(x',y')\|}{d((x,y),(x',y') )} | (x,y),(x',y')\in cl({B_{\bar{\varepsilon}}(x_0)})\times  cl( B_{\bar{\varepsilon}}(y_0) ) \} $$
which depends continuously on $(x_0,y_0)$.

Let $0<\varepsilon<\bar{\varepsilon}$, for all $(x,y), (x',y')\in B_\varepsilon(x_0)\times B_\varepsilon(y_0)$,
$$
\|v(x,y) - v(x',y')\| \le K' d((x,y),(x',y')) < K' \varepsilon\sqrt{2}
$$
We let $K_1=K'\sqrt{2}$ and scale both sides by $t>0$ to get the first inequality.

The inverse exponential map from a neighborhood of $I\in G$ to $\mathfrak{g}$ is analytic and bijective, so it is lipschitz on compact subsets, in particular on the set $C_{x_0,y_0, T} = \{H(x,y,t) | (x,y)\in cl(B_{\bar{\varepsilon}}(x_0)) \times cl( B_{\bar{\varepsilon}}(y_0)), t\in[-T,T] \}$. Again we let $K''$ be the infimum of all lipschitz constants for the inverse exponential map on  $C_{x_0,y_0, T}$ which depends continuously on $(x_0, y_0)$ once $T$ is fixed. Then for all $(x,y), (x',y')\in B_\varepsilon(x_0)\times B_\varepsilon(y_0)$ and $0<t\le T$, 
$$
d_G(H(x,y,t), H(x',y',t)) < K'' \|tv(x,y) - tv(x',y')\|<K'' K_1 t\varepsilon.
$$
We let $K = K'' K_1$.
\end{proof}

\begin{notation}
Let $\mathcal{A}$ be the geodesic path from $\tilde{x}_0$ to $\tilde{x}_0 A$, and let $\lambda(\mathcal{A})$ be the $\lambda$-measure of this path. For our purpose, the constant $T$ in the above lemma is $\lambda(\mathcal{A}) +1$.
\end{notation}


Following the discussion in section \ref{subsec:MLS}, $A$ and $\lambda$ and a choice of base point $\tilde{x}_0\in\widetilde{S}$ determines a compact set $I_1\times I_2 \subset {\mathbb{S}^1_\infty}^{(2)}$, and Anosov representation $\rho$ determines the limit set homeomorphism $L$.

\begin{lemma}
\label{lem:compact}
For a compact set $D\subset G$, there exists a constant $C>0$ such that for any $\beta\in D$, $(x,y),(x',y')\in L(I_1)\times L(I_2)$, and weights $|t|, |t'| <\lambda(\mathcal{A}) +1$ we have
$$
d_G(H(x,y,t) \beta, H(x',y',t')\beta) < C d_G(H(x,y,t) , H(x',y',t')).
$$
\end{lemma}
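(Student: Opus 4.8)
The plan is to reduce the statement to a single structural fact: right translation $R_\beta\colon g\mapsto g\beta$ is globally Lipschitz for the left-invariant metric $d_G$, with a Lipschitz constant depending only on $\beta$ (through $\Ad(\beta^{-1})$) and \emph{not} on the base point. Once this is established, the restriction of $g_1=H(x,y,t)$ and $g_2=H(x',y',t')$ to the compact parameter set $L(I_1)\times L(I_2)$ and to bounded weights becomes irrelevant: the inequality will hold for all pairs $g_1,g_2\in G$ a fortiori, and the constant $C$ will come purely from the compactness of $D$.

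The key computation is the commutation relation between left and right translation. For any $g\in G$ one has $R_\beta\circ L_g = L_{g\beta}\circ c_{\beta^{-1}}$, where $c_{\beta^{-1}}(h)=\beta^{-1}h\beta$ is conjugation, since both sides send $h$ to $gh\beta$. Differentiating at the identity and using $(dc_{\beta^{-1}})_e=\Ad(\beta^{-1})$ gives $(dR_\beta)_g\circ (dL_g)_e = (dL_{g\beta})_e\circ\Ad(\beta^{-1})$. Because the left-invariant metric identifies every tangent space $T_gG$ isometrically with $(\mathfrak{g},\langle\,,\rangle)$ via $(dL_{g^{-1}})_g$, the displayed identity shows that the operator norm of $(dR_\beta)_g$, measured with the left-invariant metric on source and target, equals $\|\Ad(\beta^{-1})\|_{\mathrm{op}}$ for \emph{every} $g$. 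Integrating this pointwise differential bound along any path from $g_1$ to $g_2$ (its image under $R_\beta$ joins $g_1\beta$ to $g_2\beta$ with length multiplied by at most $\|\Ad(\beta^{-1})\|_{\mathrm{op}}$) and taking the infimum over paths yields
\[
d_G(g_1\beta,\,g_2\beta)\ \le\ \|\Ad(\beta^{-1})\|_{\mathrm{op}}\;d_G(g_1,g_2).
\]

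To finish, I would invoke compactness of $D$. The map $\beta\mapsto\|\Ad(\beta^{-1})\|_{\mathrm{op}}$ is continuous, being the composition of inversion, the adjoint representation, and the operator norm, so it attains a finite supremum $C_0$ on $D$. Taking $C=C_0+1$ gives $d_G(H(x,y,t)\beta,H(x',y',t')\beta)\le C_0\,d_G(H(x,y,t),H(x',y',t'))<C\,d_G(H(x,y,t),H(x',y',t'))$ whenever the right-hand distance is positive, i.e.\ whenever $H(x,y,t)\ne H(x',y',t')$ (in the degenerate case the two points coincide and both sides vanish, so the inequality is understood with the distinctness hypothesis, or read with $\le$).

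The only point requiring care — and the conceptual heart of the argument — is the observation that for a \emph{left}-invariant metric the distortion of right translation is uniformly controlled by $\Ad$, independently of the base point; everything else (the path-length integration and the continuity/compactness step) is routine. I note that one could instead mimic the local-Lipschitz, difference-quotient argument used in Lemma~\ref{lem:Lem1} on a neighborhood of the compact set $\{H(x,y,t):(x,y)\in L(I_1)\times L(I_2),\ |t|\le\lambda(\mathcal{A})+1\}$, but the $\Ad$ computation is cleaner, yields an explicit constant, and makes transparent why the geometric restrictions in the statement are not actually needed.
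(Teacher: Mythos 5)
Your proof is correct, and it takes a genuinely different route from the paper's. The paper argues softly: the map $(x,y,t,\beta)\mapsto H(x,y,t)\beta$ is analytic, hence locally Lipschitz, and compactness of the parameter domain $L(I_1)\times L(I_2)\times[-(\lambda(\mathcal{A})+1),\lambda(\mathcal{A})+1]\times D$ yields a uniform constant, which therefore depends on $D$, on $L(I_1)\times L(I_2)$, and on $\lambda(\mathcal{A})$. You instead isolate the structural fact that for a left-invariant Riemannian metric the differential of right translation $R_\beta$ has operator norm exactly $\|\Ad(\beta^{-1})\|_{\mathrm{op}}$ at \emph{every} point, via the identity $R_\beta\circ L_g = L_{g\beta}\circ c_{\beta^{-1}}$, and then integrate along paths; compactness enters only through $\sup_{\beta\in D}\|\Ad(\beta^{-1})\|_{\mathrm{op}}$. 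This buys an explicit constant, independence from $L(I_1)\times L(I_2)$ and $\lambda(\mathcal{A})$ (so the restrictions on $(x,y,t)$ in the statement are revealed as superfluous), and --- importantly --- an argument that runs directly in the group variable. That last point is worth stressing: a literal reading of the paper's sketch, ``Lipschitz in the parameters $(x,y,t,\beta)$,'' would not suffice, because $H$ is non-injective (e.g.\ $H(x,y,0)=e$ for all $(x,y)$), so the parameter distance cannot in general be bounded by the group distance $d_G(H(x,y,t),H(x',y',t'))$ appearing on the right-hand side; the correct reading is Lipschitzness of $g\mapsto g\beta$ uniformly in $\beta\in D$ on a compact set of $g$'s, which is precisely what your computation establishes, and globally at that. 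Your remark that the strict inequality must be read as $\le$ in the degenerate case $H(x,y,t)=H(x',y',t')$ is accurate, and this minor blemish is present in the paper's formulation as well. Since the paper's $d_G$ is indeed the path metric of the left-invariant Riemannian metric determined by the chosen inner product on $\mathfrak{g}$, your path-length argument applies verbatim.
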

\begin{proof}
We have that the map $(x,y,t,\beta) \mapsto H(x,y,t)\beta$ is analytic with respect to any reasonable coordinates. So the result follows from locally lipschitz argument and compactness of the domain of $(x,y,t,\beta)$. Note that $C$ depends on $D$, $L(I_1)\times L(I_2)$ and $\lambda(\mathcal{A})$.
\end{proof}

\begin{lemma}
\label{lem:LemDelta}
There is a constant $C>0$ depending on $L(I_1)\times L(I_2)$ and $\lambda(\mathcal{A})$ such that if we have $(x_1,y_1),... , (x_k,y_r)\in L(I_1)\times L(I_2)$ and real positive weights $t_1, t'_1,... , t_r, t'_r$ such that $|t_i - t'_i| < \delta$, and $\sum t_i, \sum t'_i < \lambda(\mathcal{A}) +1$. Then
$$
d_G(\prod_{i=1}^r H(x_i,y_i,t_i) , \prod_{i=1}^r H(x_i,y_i,t'_i) ) < C r \delta.
$$
\end{lemma}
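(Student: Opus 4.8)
The plan is to interpolate between the two products one factor at a time, controlling each swap with left-invariance of $d_G$ together with Lemma \ref{lem:compact}. Concretely, for $0\le j\le r$ set
$$
R_j = \left(\prod_{i=1}^{j}H(x_i,y_i,t_i)\right)\left(\prod_{i=j+1}^{r}H(x_i,y_i,t'_i)\right),
$$
so that $R_r$ and $R_0$ are exactly the two products in the statement, while $R_j$ and $R_{j-1}$ differ only in their $j$-th factor (unprimed weight $t_j$ versus primed weight $t'_j$). By the triangle inequality it then suffices to produce a single constant $C_0$ with $d_G(R_j,R_{j-1})\le C_0\delta$ for each $j$, since $d_G(R_r,R_0)\le\sum_{j=1}^r d_G(R_j,R_{j-1})\le C_0\, r\,\delta$. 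Writing $U_j=\prod_{i<j}H(x_i,y_i,t_i)$ and $V_j=\prod_{i>j}H(x_i,y_i,t'_i)$, left-invariance of $d_G$ cancels the common prefix $U_j$ and gives $d_G(R_j,R_{j-1})=d_G\big(H(x_j,y_j,t_j)V_j,\,H(x_j,y_j,t'_j)V_j\big)$, in which the tail $V_j$ plays the role of the element $\beta$ of Lemma \ref{lem:compact}.

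The main point is therefore to check that all the tails $V_j$ lie in one fixed compact set $D\subset G$, independent of $r$ and of the configuration. Fix a basepoint $o\in\mathbb{H}^n$. For any product $g=\prod_i H(x_i,y_i,s_i)$ of isometries the orbit displacement telescopes,
$$
d_{\mathbb{H}^n}(o,g\,o)\le \sum_i d_{\mathbb{H}^n}\big(o,H(x_i,y_i,s_i)\,o\big).
$$
Since $(x,y,s)\mapsto H(x,y,s)\,o$ is a smooth curve whose speed is continuous, hence bounded by some $C'$, on the compact set $(L(I_1)\times L(I_2))\times[0,\lambda(\mathcal{A})+1]$, integrating the speed from $0$ to $s$ yields $d_{\mathbb{H}^n}(o,H(x,y,s)\,o)\le C's$. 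Summing and using $\sum_{i>j}t'_i\le\lambda(\mathcal{A})+1$ gives $d_{\mathbb{H}^n}(o,V_j\,o)\le C'(\lambda(\mathcal{A})+1)$, so every $V_j$ lies in $D:=\{g\in G: d_{\mathbb{H}^n}(o,g\,o)\le C'(\lambda(\mathcal{A})+1)\}$, which is compact because $SO(n,1)$ acts properly on $\mathbb{H}^n$ with compact point stabilizers.

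With $D$ fixed, Lemma \ref{lem:compact} furnishes a constant $C_1$ (depending only on $D$, on $L(I_1)\times L(I_2)$, and on $\lambda(\mathcal{A})$; note each $t_j,t'_j<\lambda(\mathcal{A})+1$, so its weight hypothesis holds) with
$$
d_G\big(H(x_j,y_j,t_j)V_j,\,H(x_j,y_j,t'_j)V_j\big)\le C_1\, d_G\big(H(x_j,y_j,t_j),\,H(x_j,y_j,t'_j)\big).
$$
Finally, because $(x,y,t)\mapsto H(x,y,t)$ is analytic, it is Lipschitz on the compact set $(L(I_1)\times L(I_2))\times[0,\lambda(\mathcal{A})+1]$; the two remaining factors share the axis $(x_j,y_j)$ with $|t_j-t'_j|<\delta$, so $d_G(H(x_j,y_j,t_j),H(x_j,y_j,t'_j))\le C_2\delta$. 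Combining the three estimates gives $d_G(R_j,R_{j-1})\le C_1C_2\,\delta$, and the telescoping sum above finishes the proof with $C=C_1C_2$.

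The only delicate step is the uniform compactness of the tails $V_j$: they are products of arbitrarily many factors, yet the hypothesis $\sum t'_i<\lambda(\mathcal{A})+1$ combined with the linear control $d_{\mathbb{H}^n}(o,H(x,y,s)\,o)\le C's$ confines them all to one compact set $D$. This is precisely what allows the per-factor estimate of Lemma \ref{lem:compact} to be applied with a constant that does not depend on $r$, so that the final bound is linear in $r$.
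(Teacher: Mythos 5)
Your proof is correct and follows essentially the same route as the paper's: telescoping replacement of one factor at a time, left-invariance of $d_G$ to cancel the common prefix, Lemma \ref{lem:compact} applied with the tail as the element $\beta$, and a per-factor Lipschitz estimate contributing $O(\delta)$ per swap, summed $r$ times. The only (minor) divergence is how the compact set $D$ is obtained--the paper bounds $d_G(I,H(x,y,t))$ linearly in $t$ and takes $D$ to be a $d_G$-ball of radius $R(\lambda(\mathcal{A})+1)$, whereas you bound the displacement of a basepoint in $\mathbb{H}^n$ and invoke properness of the $SO(n,1)$-action--and your write-up has the small merit of saying explicitly that it is the tails $V_j$, not merely the two full products as the paper literally asserts, that must lie in $D$ for the estimate to be uniform in $r$.
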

\begin{proof} First note that by left-invariance, $d_G(H(x_i,y_i,t_i), H(x_i,y_i,t'_i)) = d_G(I, H(x_i,y_i, t'_i - t_i)$. By compactness and locally lipschitz argument, there exists a constant $C'>0$ depending on $L(I_1)\times L(I_2)$ and $\lambda(\mathcal{A})$ such that 
$ d_G(I, H(x,y,t)) < C' |t| $ for all $(x,y)\in L(I_1)\times L(I_2)$ and $|t| < \lambda(\mathcal{A})$. So 
$$
d_G(H(x_i,y_i,t_i), H(x_i,y_i,t'_i)) < C'\delta
$$ for all $i$.

Let constant $R = \max\{\frac{d_G(I,H(x,y,t))}{|t|} | (x,y)\in L(I_1)\times L(I_2) \textnormal{ and } |t|\le\lambda(\mathcal{A})+1 \}$. Let $D$ be the compact subset of distance at most $R (\lambda(\mathcal{A}) +1)$ from $I$. We have $\prod_{i=1}^k H(x_i,y_i,t_i)$ and $\prod_{i=1}^k H(x_i,y_i,t'_i)$ are in $D$. Using lemma \ref{lem:compact} we get for any $\beta\in D$, 
$$
d_G(H(x_i,y_i,t_i) \beta, H(x_i,y_i, t'_i)\beta)< C'' d_G(H(x_i,y_i,t_i), H(x_i,y_i,t'_i)) < C'' C' \delta.
$$
Therefore by triangle inequality and replacing terms of the product one by one from the right, we have 
$$
d_G(\prod_{i=1}^r H(x_i,y_i,t_i) , \prod_{i=1}^r H(x_i,y_i,t'_i) ) < r C'' C' \delta = C r \delta
$$
where $C$ depends only on $L(I_1)\times L(I_2)$ and $\lambda(\mathcal{A})$.
\end{proof}

\begin{theorem}
\label{Theorem1}
Given $\rho:\pi\rightarrow SO(n,1)$ Anosov, $A\in\pi_1(S)$, $\lambda\in\mathcal{ML}(S)$, for all $\epsilon>0$ there exists a neighborhood $U\subset \mathcal{ML}(S)$ of $\lambda$ such that for any two weighted simple closed curves $l_1, l_2\in U$ we have 
$$d_G(\mathcal{E}_{l_1}(\rho)(A), \mathcal{E}_{l_2}(\rho)(A) ) < \epsilon.$$
\end{theorem}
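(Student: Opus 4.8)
The plan is to reduce the statement to a quantitative comparison of two ordered products of hyperbolic translations, and then to approximate each such product by a single \emph{canonical} product indexed by a fixed finite partition, to which Lemma \ref{lem:LemDelta} applies. By left-invariance of $d_G$ and the factorization $\mathcal{E}_{l}(\rho)(A)=\rho(A)\prod_{i}H(x_i,y_i,w_i)$ coming from Definition \ref{def:Earthquake}, it suffices to bound $d_G(P_{l_1},P_{l_2})$, where $P_l=\prod_{i}H(x_i,y_i,w_i)$ is the product over the lifts of $l$ crossing $\mathcal{A}$, taken in the order in which $\mathcal{A}$ meets them and with $w_i$ the assigned weights (the signs being absorbed into the orientation of the pairs $(x_i,y_i)$, so that $x_i\in L(I_1^o)$ and $y_i\in L(I_2^o)$). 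For $l$ in a small enough neighborhood of $\lambda$ the total weight $\sum_i w_i=\lambda_l(\mathcal{A})$ stays below $T=\lambda(\mathcal{A})+1$, so all the products lie in a fixed compact subset of $G$ and the constants of the preceding lemmas are available.

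First I would fix a finite partition of the compact set $L(I_1)\times L(I_2)$ into boxes $R_1,\dots,R_m$, each of the form $cl(B_\varepsilon(a_j))\times cl(B_\varepsilon(b_j))$ with $\varepsilon\le\bar{\varepsilon}(a_j,b_j)$ as in Lemma \ref{lem:Lem1}, centers $c_j=(a_j,b_j)$, and grid lines chosen so that each boundary $\partial R_j$ is null for $L_*\lambda|_A$. For a single weighted curve $l$ the lifts meeting $\mathcal{A}$ are pairwise disjoint geodesics in $\widetilde{S}$, and two disjoint geodesics both crossing $\mathcal{A}$ meet it in an order that is monotone in their $I_1$-endpoint (with the $I_2$-endpoint running monotonically the opposite way); $L$ being an orientation-preserving homeomorphism of the circle transports this order to $L(I_1)\times L(I_2)$. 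Consequently the lifts whose $L$-endpoints fall in a common box $R_j$ are crossed consecutively: any lift meeting $\mathcal{A}$ between two lifts of $R_j$ has both endpoints between theirs, hence again inside $R_j$. Grouping $P_l$ accordingly, replacing each factor's endpoints by the box center (the error controlled by Lemma \ref{lem:Lem1} and propagated through the product via Lemma \ref{lem:compact}), and then merging the commuting same-center factors, I would obtain
$$d_G\Big(P_l,\ \prod_{j} H(a_j,b_j,\mu_j^{(l)})\Big)<C_1 T\varepsilon,$$
where $\mu_j^{(l)}=L_*\lambda_l|_A(R_j)$ and the product over $j$ is taken in the order in which the boxes are first crossed.

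It remains to compare the two canonical products. By the weak convergence $L_*\lambda_l|_A\to L_*\lambda|_A$ recorded in Section \ref{subsec:MLS} and the null-boundary choice of the $R_j$, there is a neighborhood $U\ni\lambda$ with $|\mu_j^{(l)}-L_*\lambda|_A(R_j)|<\delta$ for every $j$ and every $l\in U$, whence $|\mu_j^{(l_1)}-\mu_j^{(l_2)}|<2\delta$ for $l_1,l_2\in U$. Since the two canonical products use the same $m$ centers in the same order and weights differing by at most $2\delta$, Lemma \ref{lem:LemDelta} gives $d_G(\prod_j H(a_j,b_j,\mu_j^{(l_1)}),\prod_j H(a_j,b_j,\mu_j^{(l_2)}))<2Cm\delta$. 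Choosing first $\varepsilon$ so small that $C_1T\varepsilon<\epsilon/3$ and then $\delta$ so small that $2Cm\delta<\epsilon/3$, the triangle inequality yields $d_G(P_{l_1},P_{l_2})<\epsilon$. The main obstacle is the grouping step: one must guarantee that, up to controllably small error, both $P_{l_1}$ and $P_{l_2}$ can be written in the \emph{same} canonical order $\prod_j H(c_j,\cdot)$ so that Lemma \ref{lem:LemDelta} is applicable. This is precisely where the disjointness of the leaves is essential, since it forces the crossing order to be monotone and the boxes to be met in consecutive blocks rather than interleaved; one still has to verify that any residual ambiguity of order among boxes sharing a common $I_1$-column involves only nearby centers (for $\varepsilon$ small the relevant $I_2$-range is small by the monotone correspondence), so that reordering those finitely many factors costs only $O(\varepsilon)$ and is absorbed into $C_1T\varepsilon$.
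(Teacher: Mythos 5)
Your overall strategy coincides with the paper's own proof: reduce by left-invariance to comparing the ordered products of hyperbolic elements, partition $L(I_1)\times L(I_2)$ into small boxes whose boundaries are null for $L_*\lambda|_A$, use disjointness of the lifts to group each product into consecutive same-box blocks, replace each block by a single factor $H$ at the box center with weight equal to the box measure (errors controlled by Lemma \ref{lem:Lem1} propagated via Lemma \ref{lem:compact}, giving the analogue of estimate (\ref{eq:estimate2})), then compare the two canonical products using weak convergence of the measures and Lemma \ref{lem:LemDelta} (the analogue of estimate (\ref{eq:estimate1})), choosing $\varepsilon$ first and $\delta$ afterwards. All of this matches the paper step for step.

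The one genuine defect is your resolution of what you correctly identify as the main obstacle, namely putting both canonical products in a common order. Your justification --- that any residual order ambiguity ``involves only nearby centers'' because ``for $\varepsilon$ small the relevant $I_2$-range is small by the monotone correspondence'' --- is false: disjointness of the leaves yields only order-preservation between the $I_1$- and $I_2$-endpoints, not any modulus of continuity, so two boxes in a common $I_1$-column can have $I_2$-centers far apart, and the corresponding factors need not nearly commute; a reordering estimate of size $O(\varepsilon)$ is not available. Fortunately, no reordering is needed at all. The crossing order of the boxes hit by a single simple curve is the restriction of the coordinatewise (staircase) partial order on the grid, so the orders attached to $l_1$ and to $l_2$ agree on the common boxes and extend to a common linear order on their union; a box hit by $l_1$ but not by $l_2$ satisfies $l_2^*(S)=0$, hence $l_1^*(S)<\delta$ by the choice of $U$, so such boxes are inserted or deleted at total cost less than $(2m-1)C\delta$ by Lemma \ref{lem:LemDelta}. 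This is exactly the paper's three-step comparison through the sets $\mathfrak{S}_1$, $\mathfrak{S}_2$ and $\mathfrak{S}_1\cap\mathfrak{S}_2$; with that substitution in place of your reordering remark, your argument is complete and agrees with the paper's.
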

\begin{proof}
Recall our notation, here $\mathcal{E}_{l_i}(\rho)$ is the representation obtained from $\rho$ by algebraic deformation along the weighted simple closed curve $l_i$.
\begin{notation}For any $\mu\in\mathcal{ML}(S)$, we abbreviate $\mu^* = L_*\mu|_A$ the induced measure on $L(I_1)\times L(I_2)$.
\end{notation}
From lemma \ref{lem:Lem1} we have a continuous function $\bar{\varepsilon}:L(I_1)\times L(I_2)\rightarrow \mathbb{R}^+$, and on this compact set $\bar{\varepsilon}$ reaches a non-zero minimum value $\bar{\varepsilon}_\textnormal{min} = \min\{\bar{\varepsilon}(x,y) | (x,y)\in L(I_1)\times L(I_2)\} > 0.$
The $K$ in lemma \ref{lem:Lem1} depends continuously on the centers of the balls considered. So we let $K_{max}$ be a constant that works for all $(x,y)$ in the compact set $L(I_1)\times L(I_2)$. Following the proof of lemma \ref{lem:LemDelta} we let $D$ be the compact subset that contains all transformation of the form $\prod_{i=1}^k H(x_i,y_i,t_i)$ as long as $(x_i,y_i)\in L(I_1)\times L(I_2)$ and $\sum t_i \le \lambda(\mathcal{A})$. Let $C_0$ be the constant of lemma \ref{lem:compact} that works for $D$.

We choose $\varepsilon_0 <\bar{\varepsilon}_\textnormal{min}$, and also such that
 $$\varepsilon_0<\frac{\epsilon}{4C_0 K_{max}(\lambda(\mathcal{A}) +1)}$$ 
The reason for choosing as such will be apparent by the end.

We have for $i=1,2$, the compact set $L(I_i)$ can be partitioned into $m$ disjoint intervals $L(I_i)_1,...,L(I_i)_m$ such that each interval is of diameter less than $\varepsilon_0$, in particular each interval lie inside an open ball of radius $\varepsilon_0$ center at some point in $L(I_i)$.  We may use half-open-half-closed (topologically $(0,1]$) intervals to ensure they are disjoint. So $L(I_1)\times L(I_2)$ is partitioned into $m^2$ squares which we name $S_{i,j} = L(I_1)_i \times L(I_2)_j$ for $1\le i,j \le m$.
\begin{notation} For a square $S\in\{S_{i,j}\}$ we let $(x_S,y_S)$ be its center. That is, $(x_S, y_S)\in S$ such that $S\subset B_{\varepsilon_0}(x_S) \times B_{\varepsilon_0}(y_S) $
\end{notation}

Since only a finite number of geodesic leaf of $\lambda$ in $I_1\times I_2$ can have positive measure, we can assume each $S_{i,j}$ to be a continuity set with respect to $\lambda^*$, that is, their boundaries have $\lambda^*$-measure $0.$

{\bf{Choose}} $U\subset\mathcal{ML}(S)$ to be a neighborhood of $\lambda$ containing all measured laminations $\mu$ such that $|\lambda(\mathcal{A}) - \mu(\mathcal{A})|<1$ and $\lambda^*$ measure and $\mu^*$ measure are $\delta/2$ close on every square $S_{i,j}$. We choose $\delta$ such that
$$
\delta < \frac{\epsilon}{6 (2m - 1)C}
$$
where the constant $C$ is from lemma \ref{lem:LemDelta}. Note that $m$ depends on $\varepsilon_0$ which in turn depends on $\epsilon, \rho, \lambda, A$.
 
Let $l_1, l_2$ be weighted simple closed curves in $U$ with weights $t_1, t_2\in\mathbb{R}^+$ respectively. We have $l_1^*$ and $l_2^*$ are $\delta$ close on $S_{i,j}$.  Since $l_1$ is a simple curve, its lifts are disjoint from each other. So, suppose $S_{i,j}$ contains the endpoints of a lift of $l_1$, then $S_{i',j'}$ cannot contain endpoints of any lift of $l_1$ if either $i<i', j>j'$ or $i>i', j<j'$. In other words if both $S_{i,j}, S_{i',j'}$ contain endpoints of lifts of $l_1$ then either $i\le i', j\le j'$ or $i\ge i', j\ge j'$. So there are at most $2m-1$ squares containing endpoints of lifts of $l_1$ and there is a complete ordering on this set of $2m-1$ squares. Same statements can be made about $l_2$ or any other geodesic lamination. Let $\mathfrak{S}_1$ be the ordered set of squares in $\{S_{i,j}\}$ which contain the endpoints of lifts of $l_1$. Similarly we define the set $\mathfrak{S}_2$ for $l_2$. The set $\mathfrak{S}_1\cap\mathfrak{S}_2$ also has a complete ordering compatible with both $\mathfrak{S}_1$ and $\mathfrak{S}_2$.
 
Note that for $S\in \mathfrak{S}_1 - (\mathfrak{S}_1\cap\mathfrak{S}_2)$,  $l^*_2(S) = 0$, so $l^*_1(S) <\delta$. Similarly for $S\in \mathfrak{S}_2 - (\mathfrak{S}_1\cap\mathfrak{S}_2)$. Then by lemma \ref{lem:LemDelta} we have the following:
$$
d_G\left(\prod_{S\in\mathfrak{S}_1 }H(x_S,y_S,l^*_1(S)), \prod_{S\in\mathfrak{S}_1 \cap \mathfrak{S}_2}H(x_S,y_S,l^*_1(S)) \right)<(2m-1)C\delta,
$$
 $$
d_G\left(\prod_{S\in\mathfrak{S}_1 \cap \mathfrak{S}_2}H(x_S,y_S,l^*_1(S)), \prod_{S\in\mathfrak{S}_1\cap\mathfrak{S}_2 }H(x_S,y_S,l^*_2(S)) \right)<(2m-1)C\delta,
$$
$$
d_G\left(\prod_{S\in\mathfrak{S}_1 \cap \mathfrak{S}_2}H(x_S,y_S,l^*_2(S)), \prod_{S\in\mathfrak{S}_2 }H(x_S,y_S,l^*_2(S)) \right)<(2m-1)C\delta.
$$
Here the order in the products are determined by the ordering of $\mathfrak{S}_1, \mathfrak{S}_2$ and $\mathfrak{S}_1\cap\mathfrak{S}_2$. Therefore we have
\begin{equation}
\label{eq:estimate1}
d_G\left(\prod_{S\in\mathfrak{S}_1 }H(x_S,y_S,l^*_1(S)), \prod_{S\in\mathfrak{S}_2 }H(x_S,y_S,l^*_2(S)) \right) <3(2m-1)C\delta <\frac{\epsilon}{2}
\end{equation}
By the choice of $\delta$.
 
 Now we will show that $\rho(A)^{-1}\mathcal{E}_{l_1}(\rho)(A)$ is close $\prod_{S\in\mathfrak{S}_1 }H(x_S,y_S,l^*_1(S))$ and similarly for $l_2$.Suppose $(x^1_1,y^1_1), (x^1_2, y^1_2), ..., (x^1_{k_1}, y^1_{k_1})$ (in order) are the pairs of end points in $L(I_1)\times L(I_2)$ corresponding to the lifts of $l_1$ which intersect the geodesic path from $\tilde{x}_0$ to $\tilde{x}_0A$, and let $t_1\in\mathbb{R}^+$ be the weight of $l_1$. We have 
\begin{equation} 
\label{eq:decomp1}
\begin{split}
\rho(A)^{-1}\mathcal{E}_{l_1}(\rho)(A) & =  \prod_{i=1}^{k_1} H(x^1_i, y^1_i, t_1) = \prod_{S\in\mathfrak{S}_1}\left(\prod_{(x^1_i,y^1_i)\in S} H(x^1_i,y^1_i,t_1) \right) 
\end{split}
\end{equation}
On the other hand,
\begin{equation}
\label{eq:decomp2}
\prod_{S\in\mathfrak{S}_1 }H(x_S,y_S,l^*_1(S)) = \prod_{S\in\mathfrak{S}_1 }\left (\prod_{(x^1_i,y^1_i)\in S}H(x_S,y_S, t_1)\right)
\end{equation}
Recall the we have the compact set $D\subset G$ and that 
$\prod_{S\in\mathfrak{S}_1\cup\mathfrak{S}_2}H(x_S,y_S,t_S) \in D$ for any $t_S$ such that $\sum_{S\in\mathfrak{S}_1}t_S <\lambda(\mathcal{A}) +1$. Applying lemma \ref{lem:compact} and \ref{lem:Lem1} we have for any $\beta\in D$ and $(x^1_i, y^1_i)\in S  \in \mathfrak{S}_1$,
\begin{equation*}
\begin{split}
d_G(H(x_S,y_S,t_1) \beta, H(x^1_i,y^1_i,t_1)\beta)&  < C_0 d_G(H(x_S,y_S,t_1), H(x^1_i,y^1_i,t_1)) \\
& < C_0 K t_1 \varepsilon_0.
\end{split}
\end{equation*}
We can now estimate distance between (\ref{eq:decomp1}) and (\ref{eq:decomp2}) by replacing the all the terms one by one from the right, each time adding $C_0 K t_1\varepsilon_0$ to the distance. So 
\begin{equation}
\label{eq:estimate2}
\begin{split}
d_G\left(\rho(A)^{-1}\mathcal{E}_{l_1}(\rho)(A),\prod_{S\in\mathfrak{S}_1 }H(x_S,y_S,l^*_1(S)) \right)<\sum_{i=1}^{k_1} C_0 K t_1\varepsilon_0 = C_0 Kl_1^*(\mathcal{A})\varepsilon_0 \\
< C_0K_{max}(\lambda(\mathcal{A})+1)\varepsilon_0 <\epsilon/4
\end{split}
\end{equation}
The analogous inequality holds for $l_2$ as well. Therefore from (\ref{eq:estimate1}) and (\ref{eq:estimate2}) we have
$$
d_G(\mathcal{E}_{l_1}(\rho)(A), \mathcal{E}_{l_2}(\rho)(A)) =  d_G\left(\rho(A)^{-1}\mathcal{E}_{l_1}(\rho)(A), \rho(A)^{-1}\mathcal{E}_{l_2}(\rho)(A)\right) < \epsilon
$$
\end{proof}

Following \cite{KerckhoffNielsen} we choose a set of generators $A_1,...,A_{2g}$ for $\pi_1(S)$ and say that two representations $\rho_1, \rho_2$ are $\epsilon$ close if $d_G(\rho_1(A_i),\rho_2(A_i)) <\epsilon$ for each $A_i$. The induced topology on the space of representations is independent of the choice of generating set. We have the following corollary by applying Theorem \ref{Theorem1} to each generator and taking the finite intersection of open neighborhoods.

\begin{corollary}
\label{cor:convergence}
For any $\epsilon>0$, $\lambda\in\mathcal{ML}(S)$ an Anosov surface group representation $\rho$ into $SO(n,1)$, there is a neighborhood $U\ni \lambda$ such that for any two weighted simple closed curves $l_1, l_2\in U$, the corresponding representations $\mathcal{E}_{l_1}(\rho)$ and $\mathcal{E}_{l_2}(\rho)$ are $\epsilon$ close.
\end{corollary}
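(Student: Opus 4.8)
The plan is to deduce Corollary \ref{cor:convergence} from Theorem \ref{Theorem1} by the standard device of controlling each generator simultaneously. Recall that once a generating set $A_1,\dots,A_{2g}$ for $\pi_1(S)$ is fixed, two representations $\rho_1,\rho_2$ are declared $\epsilon$-close precisely when $d_G(\rho_1(A_i),\rho_2(A_i))<\epsilon$ for every $i$. So the task reduces to finding a \emph{single} neighborhood $U\ni\lambda$ in $\mathcal{ML}(S)$ that forces all $2g$ of these generator-wise estimates at once.

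First I would apply Theorem \ref{Theorem1} separately to each generator $A=A_i$, with the \emph{same} target tolerance $\epsilon$ and the same measured lamination $\lambda$. For each $i\in\{1,\dots,2g\}$ this yields a neighborhood $U_i\subset\mathcal{ML}(S)$ of $\lambda$ such that for any two weighted simple closed curves $l_1,l_2\in U_i$ we have $d_G(\mathcal{E}_{l_1}(\rho)(A_i),\mathcal{E}_{l_2}(\rho)(A_i))<\epsilon$. The only subtlety to record is that the neighborhood produced by Theorem \ref{Theorem1} genuinely depends on $A_i$ through the compact set $I_1\times I_2$ and the measure $\lambda(\mathcal{A})$ associated with the geodesic arc $\mathcal{A}$ from $\tilde{x}_0$ to $\tilde{x}_0A_i$; this is exactly why we must allow a different $U_i$ for each generator before intersecting.

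Next I would set $U:=\bigcap_{i=1}^{2g}U_i$. Since $\mathcal{ML}(S)$ is a topological space and we are intersecting finitely many open neighborhoods each containing $\lambda$, the set $U$ is again an open neighborhood of $\lambda$. For any two weighted simple closed curves $l_1,l_2\in U$, membership in every $U_i$ gives $d_G(\mathcal{E}_{l_1}(\rho)(A_i),\mathcal{E}_{l_2}(\rho)(A_i))<\epsilon$ simultaneously for all $i$, which is exactly the statement that $\mathcal{E}_{l_1}(\rho)$ and $\mathcal{E}_{l_2}(\rho)$ are $\epsilon$-close in the generator metric. Finally I would remark that the induced topology on the representation space does not depend on the chosen generating set, so the resulting notion of convergence is intrinsic.

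There is essentially no hard analytic step remaining here; all the genuine difficulty — the partitioning of $L(I_1)\times L(I_2)$ into continuity squares, the weak convergence of the pushforward measures $\mu^*$, and the Lipschitz estimates of Lemmas \ref{lem:Lem1}, \ref{lem:compact}, \ref{lem:LemDelta} — has already been carried out in the proof of Theorem \ref{Theorem1}. The main (and only mild) point to be careful about is the \emph{finiteness} of the index set: because $\pi_1(S)$ of a closed surface of genus $g>1$ is finitely generated, we intersect only finitely many open sets, so openness of $U$ is automatic. Were $\pi$ not finitely generated this finite-intersection argument would fail, so I would make explicit use of the closed-surface hypothesis when invoking the generating set.
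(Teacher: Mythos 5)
Your proposal is correct and follows exactly the paper's own argument: apply Theorem \ref{Theorem1} to each of the finitely many generators $A_1,\dots,A_{2g}$ with the same tolerance $\epsilon$, and take the finite intersection of the resulting open neighborhoods, noting that the generator metric's topology is independent of the generating set. Your added remarks (that each $U_i$ genuinely depends on $A_i$ through $I_1\times I_2$ and $\lambda(\mathcal{A})$, and that finite generation is what makes the intersection open) are accurate clarifications of the same proof, not a different route.
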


This implies that for any sequence of weighted simple closed curves $(l_i)$ converging to $\lambda\in\mathcal{ML}(S)$, the corresponding sequence of representations $\mathcal{E}_{l_i}(\rho)$ is Cauchy and thus converges. Moreover, the limit of this sequence does not depend on the choice of sequence converging to $\lambda$, so we can define $\mathcal{E}_\lambda(\rho) := {\lim}_{i\rightarrow\infty} \mathcal{E}_{l_i}(\rho)$ which is also a representation of $\pi_1(S)$. Thus we have a continuous map from $\mathcal{ML}(S)$ to the connected components of representations near $\rho$.

\end{document}